\numberwithin{equation}{section}
\theoremstyle{plain}
\newtheorem{theorem}[equation]{Theorem}
\newtheorem{corollary}[equation]{Corollary}
\newtheorem{proposition}[equation]{Proposition}
\newtheorem{lemma}[equation]{Lemma}
\theoremstyle{definition}
\newtheorem{definition}[equation]{Definition}
\newtheorem{notation}[equation]{Notation}
\theoremstyle{remark}
\newtheorem{remark}[equation]{Remark}
\newtheorem{example}[equation]{Example}
\newcommand{\A}{\mathbb{A}}
\renewcommand{\P}{\mathbb{P}}
\newcommand{\PP}{\mathbb{P}}
\newcommand{\Cone}{\operatorname{Cone}}
\newcommand{\Fib}{\operatorname{Fib}}
\newcommand{\Gm}{{\mathbb{G}_m}}
\newcommand{\GL}{\mathrm{GL}}
\newcommand{\SH}{\mathbf{SH}}
\newcommand{\Spec}{\operatorname{Spec}}
\newcommand{\Supp}{\operatorname{Supp}}
\newcommand{\Ker}{\operatorname{Ker}}
\newcommand{\Coker}{\operatorname{Coker}}
\newcommand{\chark}{\operatorname{char}}
\newcommand{\pr}{\operatorname{pr}}
\newcommand{\G}{\mathbf G}
\newcommand{\SHd}{\mathbf{SH}}
\newcommand{\dSH}{\mathbf{SH}}
\newcommand{\pour}{\ar@{}[ur]|(0.2){\text{\pigpenfont C}}}
\newcommand{\podr}{\ar@{}[dr]|(0.2){\text{\pigpenfont I}}}
\newcommand{\AnuFull}{(\A^1-\{0,1\})\times S}
\newcommand{\AnupFulls}{ (\A^1-\{0,1\})_+}
\newcommand{\Anu}{(\A^1-\{0,1\})}
\author{A.~Druzhinin}
\newcommand{\Addresses}{{
  \bigskip
  \footnotesize
  
  Druzhinin. A., \textsc{Chebyshev Laboratory, St. Petersburg State University, 14th Line V.O., 29B, Saint Petersburg 199178 Russia.}\par\nopagebreak
  \textit{E-mail address},  Druzhinin. A.\texttt{andrei.druzhGgmail.com}
}}
\newcommand{\unKMW}{\underline{\mathrm{K}}^\mathrm{MW}}
\begin{document}

\title{The homomorphism of presheaves ${\mathrm{K}}^\mathrm{MW}_*\to {\pi}^{*,*}_s$ over a base.}
\thanks{Research is supported by «Native towns», a social investment program of PJSC «Gazprom Neft».}

\maketitle
\begin{abstract}
We construct the homomorphism of  presheaves
${\mathrm{K}}^\mathrm{MW}_*
\to {\pi}^{*,*}$ over an arbitrary base scheme $S$,
where $\mathrm{K}^\mathrm{MW}$ is the (naive) Milnor-Witt K-theory presheave. 

Also we discuss some partly alternative proof (or proofs) of the isomorphism of sheaves $\unKMW_n\simeq \underline{\pi}^{n,n}_s$, $n\in \mathbb Z$, over a filed $k$ originally proved in \cite{M02} and \cite{M-A1Top}.
\end{abstract}


\section{Introduction}

The presheaf of the (naive) Milnor-Witt K-theory $\mathrm{K^{MW}_*}$ is defined as a graded ring with generators $[a]\in \mathrm{K^{MW}_{1}}$, $\forall a\in \mathbf G_m$ and $\eta\in \mathrm{K^{MW}_{-1}}$ and relations 
\begin{equation}\label{eq:KMWrelations}\begin{array}{rrcll}
\text{(Steiberg relation)}& [x][1-x] &=& 0, &\forall x\in (\G_m-\{1\}),\\
& \eta[x][y]&=&[xy]-[x]-[y], &\forall x,y\in \G_m\\
& [x]\eta &=& \eta[x], &\forall x\in \G_m\\
& \eta(\eta[-1]+2)&=&0.
\end{array}\end{equation}
As shown in \cite[section 4.2.1]{DF_MW-Complexes} 
the result \cite[theorem 6.3]{GSZ-MWloc} implies that the Zariski sheafification 
$\underline{\mathrm{K}}^{\mathrm{MW}}_*$ 
of the presheaf $\mathrm{K^{MW}_*}$ over an infinite filed $k$ of odd characteristic 
is equal to the unramified Milnor-Witt K-theory sheaf $\mathbf{K}^{\mathrm{MW}}_*$ defined in \cite[section 3]{M-A1Top}, which is by defined as 
an unramified sheaf 
that is equal to the (naive) $\mathrm{K}^\mathrm{MW}$ on fields.
The stable version of the Morel's theorem \cite[theorem 19, cor. 21]{M-A1Top} states isomorphism of 
sheaves $\mathbf{K}^{\mathrm{MW}}_*\simeq \underline{\pi}_s^{*,*}$
for a (perfect) base filed $k$ of an arbitrary characteristic.

The result of the paper is the following 
\begin{theorem}
The assignment 
\begin{equation}\label{eq:assign}\begin{array}{lclcll}
[x]&\mapsto& [pt\mapsto x]&\in& [pt,\G_m^{\wedge\phantom{-} 1}]_{\SH(S)}\\
\eta&\mapsto& \Sigma^{-2}_{\G}[m-p_1-p_2]&\in& [pt,\G_m^{\wedge -1}]_{\SH(S)}, 
\end{array}\end{equation}
where $m\colon \G_m^2\to \G_m\colon (x,y)\mapsto xy$, and $p_1,p_2\colon \G^2_m\to \G_m$ are the projections,
induces 
the homomorphism of presheaves ${\mathrm{K}}^{\mathrm{MW}}_*\to {\pi}^{*,*}$ 
for any base scheme $S$.
\end{theorem}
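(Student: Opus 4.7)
The plan is to verify that the assignment respects each of the four defining relations of $\K^\MW_*$ in \eqref{eq:KMWrelations} when both sides are interpreted as morphisms in $\SH(S)$. Since all generators and all relations are pulled back under base change from universal configurations defined over $\Spec\Z$ — the $[x]$ from the identity section $\Gm\to\Gm$, and $\eta$ from the single map $\Sigma^{-2}_\G[m-p_1-p_2]$ — I would first reduce each relation to an identity of morphisms between smash powers of $\Gm$ in $\SH(\Spec\Z)$, then push it forward along the pullback $\SH(\Spec\Z)\to\SH(S)$.

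Three of the four relations should then fall to fairly direct arguments. The $\eta$-product identity $\eta[x][y]=[xy]-[x]-[y]$ is essentially the \emph{definition} of $\eta$ unpacked: the map $m-p_1-p_2\colon\Gm\times\Gm\to\Gm$ restricts to the constant basepoint on both $\Gm\times\{1\}$ and $\{1\}\times\Gm$, hence factors through the smash $\Gm^{\wedge 2}$; after desuspending by $\G^{\wedge 2}$ this factorisation is precisely the map called $\eta$ in the theorem, and precomposing with $[x]\wedge[y]$ recovers $[xy]-[x]-[y]$. The centrality $[x]\eta=\eta[x]$ is a consequence of the graded-commutativity/symmetry structure of the smash product in $\SH$ applied to the specific bidegrees of $[x]$ and $\eta$. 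The Steinberg relation $[x][1-x]=0$ translates to the assertion that the map $(\A^1\setminus\{0,1\})_+\to\Gm^{\wedge 2}$, $x\mapsto x\wedge(1-x)$, is null in $\SH(\Spec\Z)$, which I would establish via a direct $\A^1$-contraction argument for the pair $(x,1-x)$.

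The main obstacle is the last relation, $\eta(\eta[-1]+2)=0$. Over a perfect field this is a consequence of Morel's identification $\pi^{0,0}_s(k)\cong\mathrm{GW}(k)$ together with the vanishing of $\eta$ on the hyperbolic form $h=2+\eta[-1]$, but no such identification is available \emph{a priori} over an arbitrary base $S$. My approach would be to reduce the identity to the universal case of morphisms $\pt\to\Gm^{\wedge -2}$ in $\SH(\Spec\Z)$, and then either apply a conservativity argument using pullback to the residue fields of $\Spec\Z$ (which are all perfect, so Morel's theorem is available pointwise) or exhibit the vanishing via a concrete $\A^1$-homotopy or framed-correspondence trivialisation that makes sense directly over $\Z$. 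This step is where the technical weight of the proof will lie, and its execution depends on which concrete model of $\pi^{*,*}_s$ over general bases is in use.
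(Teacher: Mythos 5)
Your overall architecture (verify the four relations of \eqref{eq:KMWrelations} as identities between morphisms of smash powers of $\G_m$, everything being defined over the prime base and pulled back) matches the paper, and your treatment of the $\eta$-product relation and of the Steinberg relation is in the same spirit as the paper's (the latter is the Hu--Kriz contraction argument, which the paper runs via the $\A^1$-contractible curve $Z((x-1)(x+y-1))\subset\A^2$). The genuine gap is in the fourth relation $\eta(\eta[-1]+2)=0$. Your first fallback --- reduce to $\Spec\Z$ and apply Morel's theorem after pulling back to the residue fields --- does not close the argument: joint conservativity of the pullbacks $\SH(\Spec\Z)\to\SH(\mathbb F_p),\ \SH(\mathbb Q)$ on the groups $[\pt_+,\G_m^{\wedge -2}]$ is not a formal property of $\SH$ and is not available in the generality you need, so knowing the relation pointwise on residue fields does not yield it over $\Z$, let alone over an arbitrary $S$. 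Your second fallback (a concrete trivialisation over $\Z$) is the right route, but you have not produced it, and it is precisely where the content of the theorem lies.

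What the paper actually does for this relation is a self-contained computation valid over any $S$, with no input from Morel's theorem. One writes $2+\eta[-1]=\la 1\ra+\la -1\ra=\Sigma^{-1}_\G[\overline H]$ where $H=\id_{\G_m}+m_{-1}$ is a morphism in the linearisation of $\Sm_S$ ($m_{-1}\colon x\mapsto -x$); an explicit calculation in the Karoubi envelope of $\Z\Sm$ shows that $\overline m\circ(\overline m\bullet\overline{(-1)})+2\overline m$ equals $\overline H\circ\overline m$, so that $\eta(\eta[-1]+2)=\Sigma^{-2}_\G[\overline H\circ\overline m]$. The sign proposition (prop \ref{prop:composashort}) lets one rewrite this, up to units, as $[\overline m\circ\Sigma^1_\G\overline H]$; the twist lemma (lm \ref{lm:Twist}), proved by factoring a coordinate permutation of $\A^l/(\A^l-0)$ into elementary matrices, identifies $\Sigma^1_\G\overline{m_{-1}}$ with the transposition $\overline T$ of $\G_m^{\wedge 2}$, i.e.\ $[T]=\Sigma^2_\G\la -1\ra$; and then $\overline m\circ(\id_{\G_m^2}-\overline T)=0$ because $m\circ T=m$. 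Note that the same two ingredients --- the twist lemma and $m\circ T=m$ --- are also what makes your third relation work: bare graded commutativity of the smash product gives $[x]\eta=\eta[x]$ only up to the twist $\la -1\ra$ coming from permuting the $\G_m$-factors, and it is the commutativity of the multiplication map $m$ that cancels that twist. So you should treat lm \ref{lm:Twist} and rem \ref{rm:Tm} as the two key lemmas your sketch is missing, rather than hoping to outsource the last relation to the field case.
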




Since there is a canonical endomorphism $\mathrm{K}^\mathrm{MW}_{*}(S)\to \mathrm{K}^\mathrm{MW}_{*+1}(S\wedge \G_m)$ given by $\phi\mapsto [t]\phi$, and $[\G_m\wedge X, \G_m\wedge Y]=[X,Y]$ in $\SH(S)$,
the theorem follows directly form  
\begin{proposition}
The following equalities hold in the stable motivic homotopy category $\dSH(S)$ for all $S$
\begin{equation}\label{eq:SHK^MWrelations}\begin{array}{lclcl}
[(x,1-x)] &=& 0&
\in  &[\AnupFulls,\G_m^{ \wedge 2 } ]_{\dSH(S)},\\ 
\Sigma\eta[(x)][(y)] &=& \Sigma^2_\G([m]-[{p_1}]-[{p_2}])&
\in &[\G_m^{\wedge 2}\wedge(\G_m^2)_+, \G_m^{\wedge 3}]_{\dSH(S)},\\
\empty [(x)]\Sigma\eta &=&\Sigma\eta [(x)]&
\in &[\G_m^{\wedge 2}\wedge (\G_m^1)_+, \G_m^{\wedge 2}]_{\dSH(S)} \\
(\Sigma\eta)^2[-1] + 2\Sigma\eta &=& 0&
\in &[\G_m^{\wedge 2},pt_+]_{\dSH(S)}
\end{array}\end{equation}
where 
the products in equalities are the external product with respect to the monoidal structure, and
\begin{itemize}[leftmargin=9pt]
\item[-]
$\Sigma\eta$ denotes $\Sigma^2_{\G}\eta$,
\item[-]
$(1-x,x)\colon \AnuFull\to \G_m\times\G_m$ is a regular map, and $[(1-x,x)]$ is its class in $\dSH(S)$,
\item[-]
$(x),(y)\colon \G_m\to \G_m$ denote two copies on the identity map, $[(x)],[(y)]\in [(\G_m)_+, \G_m^{\wedge 1}]$ denote their classes, and
$[-1]\colon [pt_+,\G_m^{\wedge 1}]$ denotes the class of $ pt\to \G_m$ given by $-1$,
\item[-] $m, p_1,p_2\colon \G_m^2\to \G_m$ are the product map and the projection maps, and 
$[m],[p_1],[p_2]\colon [\G_m^{\wedge 2}, \G_m^{\wedge 1}]$ are the induced homomorphisms.
\end{itemize}
\end{proposition}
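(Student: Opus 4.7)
The overall plan is to verify each of the four identities in~\eqref{eq:SHK^MWrelations} by producing explicit $\A^1$-homotopies together with formal manipulations in the symmetric monoidal category $\dSH(S)$. All schemes, morphisms, and rational points appearing in the statement---$\G_m$ with its multiplication $m$ and projections $p_1,p_2$, the open subscheme $\A^1-\{0,1\}$ together with the regular map $(1-x,x)$, and the rational point $-1$---are defined already over $\Spec\Z$, so it suffices to prove each identity in $\dSH(\Spec\Z)$ and pull back along the structure map $S\to\Spec\Z$. In fact the homotopies described below are given by polynomial formulas with integer coefficients, and so the proofs can be carried out uniformly over any base $S$.

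Relations~(2) and~(3) are essentially formal consequences of the definition of $\eta$ and of the symmetric-monoidal structure of $\dSH(S)$. For~(2), unfolding $\Sigma\eta=\Sigma^2_\G\eta$ with $\eta=\Sigma^{-2}_\G[m-p_1-p_2]$ and using functoriality of the external smash product identifies $\Sigma\eta[(x)][(y)]$ with the external product of $[m-p_1-p_2]$ and the identity maps $[(x)],[(y)]$; after reassembling $\Sigma^2_\G$ this collapses to $\Sigma^2_\G([m]-[p_1]-[p_2])$. For~(3), the two sides differ by an even number of transpositions of $\G_m^{\wedge 1}$-factors in source and target, whose composite acts as the identity.

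Relation~(4), the hyperbolic identity, follows from the computation of the motivic braiding $\tau_{\G_m,\G_m}\colon\G_m^{\wedge 2}\to\G_m^{\wedge 2}$ due to Morel: one shows that $\tau$ represents $-\langle-1\rangle=-(1+[-1]\eta)$ in the endomorphism ring of the sphere spectrum (after appropriate suspensions). Combined with $\tau^2=\id$ and with the commutativity identity~(3) applied at $x=-1$, this unpacks directly into the stated equation $(\Sigma\eta)^2[-1]+2\Sigma\eta=0$. Morel's braiding computation is a consequence of structural properties of $\dSH$ that hold already over $\Spec\Z$, and therefore transports to any base $S$ by pullback.

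Relation~(1), the Steinberg relation, is the only step that requires a genuine motivic-homotopy construction, and is the main obstacle. The strategy is to exhibit a regular map $H\colon(\A^1-\{0,1\})\times\A^1\to\G_m\times\G_m$ with $H|_{t=0}=(1-x,x)$ and with $H|_{t=1}$ factoring through $\{1\}\times\G_m$ or $\G_m\times\{1\}$, so that the class of $H|_{t=1}$ vanishes in the smash product $\G_m\wedge\G_m$. Such an $H$ can be written in polynomial form with integer coefficients, following essentially the classical parameterization underlying Morel's proof of the motivic Steinberg relation. The sole nontrivial verification is that both coordinates of $H$ remain simultaneously invertible on all of $(\A^1-\{0,1\})\times\A^1$; this is a direct polynomial check. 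Once it is in place, Relation~(1) follows by naturality of the cofiber sequence defining $\G_m\wedge\G_m$.
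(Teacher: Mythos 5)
The central difficulty in this proposition is the Steinberg relation, and that is exactly where your proposal has a genuine gap. You assert the existence of a regular map $H\colon(\A^1-\{0,1\})\times\A^1\to\G_m\times\G_m$ with $H|_{t=0}=(1-x,x)$ and with $H|_{t=1}$ landing in $\{1\}\times\G_m$ or $\G_m\times\{1\}$, claimed to be writable ``in polynomial form with integer coefficients'' after a ``direct polynomial check.'' No such naive $\A^1$-homotopy is exhibited, and none is known: if it existed, the Steinberg relation would already hold for unstable naive homotopy classes of maps of schemes, whereas every known proof (Hu--Kriz, Powell, and the one in this paper) requires a genuinely stable, or at least $S^1$-suspended, argument; Morel's treatment over a field goes through the universal property of $\mathbf{K}^{\mathrm{MW}}_*$ among strongly $\A^1$-invariant sheaves, not through a parameterized homotopy. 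The paper's actual route is different: it takes the $\A^1$-contractible curve $X=Z((x-1)(x+y-1))\subset\A^2$, whose restriction to $\G_m^2$ is $U=(\A^1-\{0,1\})\amalg\G_m$, and shows via the diagram of cofibre sequences in Lemma \ref{sbl:CurveinSH} that $U/pt\to\G_m\wedge\G_m$ is null, because $X/U$ splits into two pieces each mapping trivially to the corresponding summand of $\Cone(\beta)\simeq\big((\A^2-0)/(\G_m\times\A^1)\big)\vee\big((\A^2-0)/(\A^1\times\G_m)\big)$ by the affine homotopy $p\mapsto(\xi_1(p),(1-\lambda)\xi_2(p)+\lambda)$ --- a homotopy that exists only after passing to these quotients, not on $\G_m\times\G_m$ itself. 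The resulting constant through which $[(x,1-x)]\amalg[(x,1)]$ factors is then identified with $0$ by evaluating the second component at the point $1$. Your reduction to $\Spec\Z$ is harmless but does not touch this; the construction of the homotopy is the entire content of relation (1).

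For the remaining relations your sketch is too coarse about signs. To compare $[(x)]\Sigma\eta$ with $\Sigma\eta[(x)]$ one must permute a $\G_m^{\wedge 1}$-factor past $\G_m^{\wedge 2}$ in the source (an even permutation) and transpose the two $\G_m^{\wedge 1}$-factors in the target (odd), so the total permutation is odd and, by Lemma \ref{lm:Twist}, the two sides a priori differ by $\langle-1\rangle$ rather than coinciding; your claim of ``an even number of transpositions'' is false. The paper removes the discrepancy by transporting the target twist to the source (Proposition \ref{prop:composashort}) and absorbing it with $m\circ T=m$ (Remark \ref{rm:Tm}); the commutativity of the multiplication is an essential input your argument omits. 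Similarly for relation (4): in the convention of Lemma \ref{lm:Twist} the braiding of $\G_m^{\wedge 2}$ is $\Sigma^2_\G\langle-1\rangle$, not $-\langle-1\rangle$ (the latter is the braiding of $T\wedge T$, equivalently $\P^1\wedge\P^1$), and ``unpacks directly'' conceals the actual work, namely the computation of $\overline{m}\circ(\overline{m}\bullet\overline{(-1)})+2\overline{m}$ in the linearisation of $\Sm$ (Remark \ref{rm:computredcom}), its factorisation as $\overline{H}\circ\overline{m}$ with $H=\id+m_{-1}$, and the vanishing $\overline{m}\circ(\id-\overline{T})=0$. Relation (2) is indeed essentially formal, but even there the identification of the external product $\Sigma\eta\bullet[(x)]\bullet[(y)]$ with the composition $\Sigma\eta\circ([(x)]\bullet[(y)])$ carries the sign $\langle-1\rangle^{mn}$ of Corollary \ref{cor:composa}, which happens to be trivial in this instance but has to be checked.
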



We should say that the Steinberg relation in $\SH$ (the first one in the list above) 
is proven originally by Po Hu and Igor Kriz in \cite{PHandIK-Staiberg}. 
The proof is written for the base filed base but it works as well for an arbitrary base.
To keep the text being complete
we present here the short alternative argument.
From what the author understands this argument is essentially equivalent to the original proof.

\subsection{Strategy of the proof}

\subsubsection{Steinberg relation.}

The Steinberg relation follows from that the 
class of a morphism $c\colon U\to \G_m\times\G_m$ in the group $[U_+,  \G_m^{\wedge 2}]_{\SHd(S)}$
is equal to a composition $U_+\to pt_+\to \G_m^{\wedge 2}$,
if $c$ fits into a diagram $$\xymatrix{
U\ar[d]\ar[r]& X\ar[d]&  X\times_{\A^2} ((\A^1\times\{1\})\cup (\{1\}\times \A^1))\ar[l]\ar[d]\ar[rd]&\\
\G_m^2\ar[r]& \A^2 &(\A^1\times\{1\})\cup (\{1\}\times \A^1)\ar[l] & \{(1,0),(0,1)\}\ar[l] 
,}$$
such that $X\simeq pt\in \SH(S)$.
Then applying this to $U=\Anu\amalg \G_m=Z(x+y-1)\amalg Z(x-1)\subset \G_m^2$
we see that the classes of the maps $\Anu_+\to \G_m^{\wedge 2}\colon (t)\mapsto (t,1-t)$, and $(\G_m)_+\to \G_m^{\wedge 2}\colon (t)\mapsto (1,t)$ both are 
equal to the same constant. But the class of the second one is trivial, hence the class of the first one too.

In non-stable case our the first argument requires the stabilisation by one $S^1$ suspension. 

\subsubsection{Other relations}
The image of the homomorphism $\mathrm{K}^\mathrm{MW}_n\to \pi^{n,n}(S)$ are the sum of class $\Sigma^{-l}_\G f$ for a regular morphisms $f\in \G_m^{l_1}\to \G_m^{l+n}$.
Let $\bullet$ denotes the external product (composition) of morphisms with respect to the monoidal structure, 
and $\circ$ denotes the composition of morphisms in the categorical sense.
The last three relations from \eqref{eq:SHK^MWrelations} follows from the following observations:
\begin{itemize}[leftmargin=50pt]
\item[(prop \ref{prop:composashort})] 
$\Sigma^{r_1}_\G[f\circ g]= \langle -1\rangle^{m  n} f \bullet g =  \Sigma^{r_2}_\G\langle -1\rangle^{m(m^\prime+1)} (g \circ \Sigma^{m-m^\prime}_\G f)$\\ 
for any $f \in Map(\G_m^{n},\G_m^{m^\prime})$, $g\in Map(\G_m^{ m}, \G_m^{ n})$ for some $r_1,r_2\in \mathbb Z$;
\item[(lm \ref{lm:Twist})]
$[T]=\Sigma^2_\G\langle -1\rangle\in [\G_m\wedge\G_m,\G_m\wedge\G_m]_{\SH(S)}$, \\where $T$ is a permutation on $\G_m^2$ and $\Sigma^1_\G\langle -1\rangle\in [\G^{\wedge 1}_m,\G^{\wedge 1}_m]_{\SH(S)}$ 
is the class of the map $\G_m\to \G_m\colon t\mapsto -t$.
\item[(rem \ref{rm:Tm})]
$m\circ T=m$, 
where $m\colon \G_m^2\to \G_m$ is the product.
\end{itemize}
The first equality is a variant of the fact that two groupoid operations satisfying the property $(f_1\circ f_2)\bullet (g_1\circ g_2)=(f_1\bullet g_1)\circ (f_2\bullet g_2)$ are equal and commutative,
Actually the first equality 
follows from the second one from the list above. 
The second equality can be proven either as a consequence of the fact that elementary transformation (over $\mathbb Z$) which permutes coordinates.
In the nonstable case the argument with elementary transformations
uses stabilisation by one $S^2$.
So the proof of other three relations holds after the smash with $S^2$.

Note that alternatively 
the equality $T=\Sigma_{\G}\langle -1\rangle$
can be obtained using the framed permutation homotopy on $\G_m^{\wedge 2}$ form \cite{AGP-CancTh}), but this argument requires $\P^1$ stabilisation.
%
%
\subsection{Proofs for the case of a base field}
\subsubsection{(The universal strongly homotopy invariant theory)}
In the Morel's book \cite{M-A1Top} the homomorphism $\mathrm{\mathbf{K}}^\mathrm{WM}_n\to \pi^{n,n}$, where $\mathrm{\mathbf{K}}^\mathrm{WM}_n$ is the sheaf of the unramified Milnor-Witt K-theory,  
follows from the universal property of the sheaf $\mathrm{\mathbf{K}}^\mathrm{WM}_n$ in the class of 
strongly homotopy invariant sheaves over a filed. 
In the case of non-zero dimensional base it is unknown does the sheaves $\pi^{n,n}$ are strongly homotopy invariant sheaves. 
In the same time
some key inner arguments from \cite{M-A1Top} in the proof of the last three relations \eqref{eq:KMWrelations}
looks being general and should work over an arbitrary base.
So the author doesn't know entirely is it possible to prove this relations
relations in $\pi^{n,n}$ over any $S$ using the arguments from \cite{M-A1Top}. 

\subsubsection{(The Steinberg relation)}
The Steinberg relation in $\SH(k)$ for an arbitrary filed $k$ was proven originally 
by Po Hu and Igor Kriz in \cite{PHandIK-Staiberg}, and reproven by Geoffrey M.L. Powell in \cite{Powell-Steinberg}.
The arguments of both proofs can be word by word repeated in the case of an arbitrary base. 

The author apologize for the doubts in the correctness of the arguments \cite{PHandIK-Staiberg}, which he had wroted in previous version, now he had understend the original proof. 
\par Nevertheless the author still do not understand the alternative proof in \cite{Powell-Steinberg}. In \cite[definition 3.0.7, proposition 3.0.8]{Powell-Steinberg}  nothing is mentioned about the base point in the $\A^n$ for the morphism $X\to \A^n$, 
and is the cone considered in \cite[proposition 3.0.8]{Powell-Steinberg} is just the cone of the morphism the unpointed varieties  $X\to \A^n$ then in $\mathbf {SH}$ it is equivalent to $S^1\wedge \Fib_{\SHd}(X_+\to pt_+)$, but not to to the suspension of $X_+$.
The author would appreciate if some one can explain what is meant there. 
%
%
%

\subsubsection{(Framed correspondences)}
Alternatively the relations of Milnor-Witt K-theory in the stable motivic homotopy groups over fields ware proven in  \cite{Nesh-FrKMW} by A.~Neshitov. 
The prove is given by precise framed homotopies and 
the relations  are proven in  
$H^0(\mathbf ZF(\Delta^\bullet,\G_n^{\wedge l}))$, which is formally stronger then relations in $\pi^{n,n}(k)$.
In the same time the proof of Steinberg relation requires assumption that the base field is of characteristic different form $2$ and $3$.
From what the author understands at least some of the framed homotopies used in the proof could be lifted at least to henselian local bases.
If this is true for all homotopies, then this would imply 
the homomorphism of Nisnevich sheaves $\underline{\mathrm{K}}^\mathrm{MW}_n\to \underline{\pi}^{n,n}$ over $\mathbf{Z}[1/6]$.

\subsection{Acknowledgement} The author is grateful to the J.~
I.~
Kylling for the discussions and comparing of the proofs of the Steinberg relation given in  \cite{PHandIK-Staiberg}, \cite{Powell-Steinberg}, and the present one.
The author is grateful for the participants of the Chebyshev laboratory seminar on K-theory and motivic homotopy theory and especially for A.~
Ananievsky for the corrections and useful comments on the text.
The author thanks M.~
Hoyois for the explanation and the reference for the construction on $\SHd(S)$ over an arbitrary scheme $S$. 

\subsection{Notation:}
All products, points, and schemes are considered relatively over the base scheme $S$.

\section{Proof of the Steinberg relation}
\subsection{The reduced curve.}

In the subsection we prove the Steinberg relation up to some constant, i.e we prove that the morphism 
$(1-x,x)\colon \Anu\to \G_m^2$ 
can be passed in $\SHd(S)$ 
throw $(\Anu)_+\to pt_+\to \G_m^2$ in $\SHd(S)$. 
We refer reader to \cite[Appendix C]{Hoyois2013} for the definition of the stable motivic homotopy category $\mathbf{SH}(S)$ over an arbitrary scheme $S$.

\begin{notation}
For any $X\in Sm_S$ denote by $X/pt$ the fibre of the morphism $X\to pt$ in $\SHd(S)$ $$X/pt\to X_+\to pt_+\to (C/pt)\wedge S^1.$$ So any morphism $X\to Y$ for $X,Y\in \SHd(S)$ induces the morphism $C/pt\to Y$ via the composition $C/pt\to C\to Y$.
\end{notation}


\begin{lemma}\label{sbl:CurveinSH}
Let $X$ be a scheme over $S$, 
and assume that $X$ is $\A^1$ contractable, i.e. the canonical morphism $X\to pt$ is equivalence in $SH(S)$.
Let $\xi\colon X\to \A^2$ be a morphism of schemes such that $X\times_{\A^2} (Z(xy)-Z(x+y-1))=\emptyset$, where $x$ and $y$ denotes coordinate functions on $\A^2$ (and so $Z(xy)-Z(x+y-1)\simeq (\A^1-\{1\})\coprod_{\{0\}}(\A^1-\{1\})$).

Denote $U=X\times_{\A^2} \G_m^2$ and $c\colon U\to \G_m^2$.
Then the class of the morphism $c$ in $[U/pt,\G_m{\wedge}\G_m]$ is trivial.
\end{lemma}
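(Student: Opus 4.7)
Since $X \simeq \pt$ in $\SH(S)$, the cofibre sequence $U_+ \to X_+ \to X/U$ gives $X_+ \simeq \pt_+$ and $X/U \simeq \Sigma(U/\pt)$; under these identifications $U/\pt \simeq \mathrm{fib}(U_+ \to X_+)$, and the class $[c|_{U/\pt}] \in [U/\pt, \G_m^{\wedge 2}]$ is precisely the obstruction to extending $c \colon U_+ \to \G_m^{\wedge 2}$ to a map $\tilde{c} \colon X_+ \to \G_m^{\wedge 2}$. Conversely, any such extension factors (up to homotopy) as $X_+ \simeq \pt_+ \to \G_m^{\wedge 2}$, and the composite $U/\pt \to U_+ \to \pt_+$ is null by the very definition of $U/\pt$ as a fibre, whence $[c|_{U/\pt}] = 0$. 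The proof therefore reduces to producing the extension $\tilde{c}$.

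\textbf{Geometric input.} Set $p_1 = (0,1)$, $p_2 = (1,0)$, and $Y = (\A^1 \times \{1\}) \cup (\{1\} \times \A^1) \subset \A^2$. The hypothesis of the lemma is equivalent to the set-theoretic inclusion $\xi^{-1}(Z(xy)) \subseteq \xi^{-1}(\{p_1,p_2\})$. Two observations drive the construction. First, $\A^2/Y = (\A^1,1) \wedge (\A^1,1) \simeq \pt$ in $\SH(S)$, since $(\A^1,1) \simeq \pt$. Second, the composition $Y \cap \G_m^2 \hookrightarrow \G_m^2 \twoheadrightarrow \G_m^{\wedge 2}$ is null, because $Y \cap \G_m^2 = (\{1\} \times \G_m) \cup (\G_m \times \{1\})$ is exactly the subscheme collapsed to the basepoint by the smash projection.

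\textbf{Construction via Mayer--Vietoris.} Cover $X$ by the Zariski opens $X_i = X \setminus \xi^{-1}(p_i)$. The hypothesis forces $X_1 \cap X_2 = U$ as open subschemes of $X$. On $X_1$ the coordinate $\xi_1$ is nowhere zero, and equals $1$ identically on $X_1 \setminus U = \xi^{-1}(p_2)$; symmetrically for $X_2$. By Nisnevich Mayer--Vietoris in $\SH(S)$, producing $\tilde{c}$ reduces to producing compatible extensions $\tilde{c}_i \colon (X_i)_+ \to \G_m^{\wedge 2}$ of $c$. The intended $\tilde{c}_1$ sends $X_1 \setminus U$ to the basepoint of $\G_m^{\wedge 2}$, which is natural because the first coordinate is already the basepoint $1 \in \G_m$ there. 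Rigorously, the extension arises from the null composite $(X_1)_+ \to (\G_m \times \A^1)_+ \to (\G_m,1) \wedge (\A^1,1) \simeq \pt$, promoted to a map into $\G_m^{\wedge 2} = (\G_m,1) \wedge (\G_m,1)$ using the inclusion $\G_m \hookrightarrow \A^1$ on the second smash factor together with the fact that $\xi_1 = 1$ on $X_1 \setminus U$. Symmetrically for $\tilde{c}_2$; both extensions restrict to $c$ on $U_+$ by construction, and Mayer--Vietoris glues them to $\tilde{c}$.

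\textbf{Expected difficulty.} The crux is the promotion step above: rigorously lifting the null map into $(\G_m,1) \wedge (\A^1,1)$ to a map into $\G_m^{\wedge 2}$ that restricts to $c$ on $U_+$. While the geometric picture (send the complement to the basepoint, since $\xi_1 = 1$ there) is clear, the formal construction in $\SH(S)$ likely requires a purity argument around the closed subscheme $\xi^{-1}(p_i) \hookrightarrow X_i$, or an explicit $\A^1$-homotopy compatible with the smash structure. The introduction's remark that the non-stable analogue requires an extra $S^1$-stabilisation indicates the extension is genuinely stable in nature, consistent with the lemma being stated in $\SH(S)$.
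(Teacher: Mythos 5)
Your reduction and your decomposition are both correct and agree with the paper's: the class $[c|_{U/pt}]$ vanishes as soon as $c\colon U_+\to\G_m^{\wedge 2}$ extends over $X_+\simeq pt_+$, and the cover $X=X_1\cup X_2$ with $X_1\cap X_2=U$ is exactly the paper's splitting of $X/U=\Cone(U_+\to X_+)$ into two pieces supported at $\xi^{-1}(1,0)$ and $\xi^{-1}(0,1)$. The genuine gap is the step you yourself flag as the crux: the construction of the extensions $\tilde c_i$. As written, the ``promotion'' is not an argument. The inclusion $\G_m\hookrightarrow\A^1$ induces a map $\G_m^{\wedge 2}\to(\G_m,1)\wedge(\A^1,1)$, i.e.\ it points \emph{from} $\G_m^{\wedge 2}$ \emph{into} the contractible object, not the other way; and since every map into a contractible target is null, the observation that $(X_1)_+\to(\G_m,1)\wedge(\A^1,1)$ is null carries no information and produces no map $(X_1)_+\to\G_m^{\wedge 2}$, let alone one restricting to $c$ on $U_+$. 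This is the only step in which the geometric hypothesis on $\xi$ would be converted into an actual homotopy, so the proposal is missing its load-bearing computation.

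The paper closes this step without ever building $\tilde c_i$ as maps to $\G_m^{\wedge 2}$: it shows instead that the obstruction vanishes. Each local piece $(X-Z_i)/U$ is mapped by $\xi$ to $(\A^2-0)/(\G_m\times\A^1)$ (respectively $(\A^2-0)/(\A^1\times\G_m)$), and the explicit linear homotopy $p\mapsto(\xi_1(p),\,(1-\lambda)\xi_2(p)+\lambda)$ --- legitimate as a homotopy of maps of pairs because the relevant coordinate of $\xi$ is invertible on that piece and on $U$ --- contracts this map to one landing entirely in $\G_m\times\A^1$, hence null on the quotient. A diagram of cofibre sequences comparing $(\A^2-0)_+$, the coordinate cross, and $\G_m\wedge\G_m$ then converts the vanishing of $X/U\to\Cone(\beta)$ into the vanishing of $[c|_{U/pt}]$, using $X\simeq pt$ to identify $\Cone(U/pt\to X/pt)$ with $U/pt\wedge S^1$. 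To complete your version you must replace the promotion step by this homotopy (or an equivalent excision/purity argument around $\xi^{-1}(p_i)$) and carry out the bookkeeping relating the null-homotopy of the map of pairs $(X_i,U)\to(\A^2-0,\G_m\times\A^1)$ to the smash projection $(\G_m^2)_+\to\G_m^{\wedge 2}$; that bookkeeping is precisely the content of the paper's diagram.
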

\begin{proof}
\newcommand{\Gmp}{\G_{m,+}}
\newcommand{\Gmd}{(\G_m,1)}
\newcommand{\Aod}{(\A^1,1)}
It follows form the assumption on $\xi$ that $X\times_{\A^2} \{(0,0)\}=\emptyset$.
Consider the diagram of the triangles in $\SHd(S)$
\begin{equation}\label{eqdiag:StRelEq}\xymatrix{
\left(
\G_m\times \{1\}\cup \atop \{1\}\times \G_m
\right)_+
\ar[r]\ar[d]& 
\left( 
(\A^1\times\{0\}) \cup\atop
(\{0\}\times \A^1)
\right)_+
\ar[d]^{\alpha} \ar[r]& 
T^{\wedge 4}\ar[d]\\
\Gmp\vee\Gmp\ar[r]^{\beta}\ar[d]& 
(\A^2-0)_+ \ar[r]\ar[d]& 
\Cone(\beta)\ar[d]\\
\G_m\wedge\G_m \ar[r]^{\gamma}& 
\Cone(\alpha)\ar[r]& \Cone(\gamma)\ar[r]&
\G_m^{\wedge 2}\wedge S^1\\
U_+\ar[r]\ar[u]& 
X_+ \ar[r]\ar[u]& 
X/U\ar[u]^{\delta}
\ar[r]&U_+\wedge S^1\ar[u]\\
U/pt\ar[r]^{\zeta}\ar[u]& X/pt \ar[r]\ar[u]& Cone(\zeta)\ar[u]^{\simeq}\ar[r]^{\simeq}&U/pt\wedge S^1\ar[u]
}\end{equation}
where $X/U = \Cone(U\to X) = \Cone(U_+\to X_+)$.

It follows form the assumption $\xi$ 
that 
$X/U\simeq X/U_1 \vee X/U_2\simeq (X-Z_1)/(U-Z_1)\vee (X-Z_2)/(U-Z_2)$,
where 
$Z_1 = (X\times_{\A^2} \A^1\times \{0\})_{red} = X\times_{\A^2} \{(1,0)\}$, 
$Z_2 = X\times_{\A^1} \{0\}\times \A^1 = X\times_{\A^2} \{(0,1)\}$,
and $U_1= X-Z_2$, $U_2 =X-Z_1$.

Note that 
$$\Cone(\beta)\simeq (\Gmd\wedge T)\vee (T\wedge\Gmd)\simeq \big((\A^2-0)/(\G_m\times\A^1)\big)\vee\big((\A^2-0)/(\A^1\times \G_m)\big).$$
Let $\xi_1, \xi_2\to X\to \A^1$, $\xi=(\xi_1,\xi_2)$.
The homotopy
$$
(X-Z_1)/(U-Z_1)\times \A^1\to (\A^2-0)/(\G_m\times\A^1)\colon p\mapsto (\xi_1(p),(1-\lambda)\xi_2(p) + \lambda), 
$$
implies that the morphism $(X-Z_1)/(U-Z_1)\to \Cone(\beta)$ induced by $\xi$ in $\SH(S)$ is trivial.
Similarly the morphism $(X-Z_1)/(U-Z_1)\to \Cone(\beta)$ is trivial.
Hence the vertical arrow $X/U\to \Cone(\beta)$ in the diagram is trivial.

Since $X\to pt$ is an isomorphism by assumption, it follows that the last arrow in the last row in the diagram is isomorphism;
then since the second last vertical arrow in the diagram is isomorphism, 
it follows that the composition $S^1\wedge U/pt\to S^1\wedge (U_+)\simeq \G_m\wedge \G_m$ is trivial. The claim follows.
\end{proof}

\begin{proposition}\label{prop:StRel}
The class of the morphism $\Anu\to \G_m\colon (t)\mapsto (t,1-t)$ in $[\Anu, \G_m^2]_{\SH(S)}$ is trivial.
\end{proposition}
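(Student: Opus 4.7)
The plan is to apply Sublemma~\ref{sbl:CurveinSH} with the reduced closed subscheme $X \subset \A^2$ defined by $(x+y-1)(x-1) = 0$, i.e., the union of the two affine lines $L_1 = Z(x+y-1)$ and $L_2 = Z(x-1)$ meeting transversally at the single point $(1,0)$, together with the inclusion $\xi\colon X \hookrightarrow \A^2$.

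To verify the hypotheses: the explicit $\A^1$-homotopy $H\colon X \times \A^1 \to X$, $((x,y),\lambda) \mapsto (\lambda + (1-\lambda)x,\, (1-\lambda)y)$, contracts $X$ to $(1,0)$ (well-defined on $X$ since the substituted product factors as $(1-\lambda)^2(x+y-1)(x-1)$), so $X \simeq pt$ in $\SHd(S)$. The preimage condition $X \cap Z(xy) \subseteq Z(x+y-1)$ holds by direct calculation: $X \cap Z(xy) = \{(0,1),\,(1,0)\}$, and both points satisfy $x+y=1$. The sublemma thus applies: the class of $c\colon U \to \G_m^2$ in $[U/pt, \G_m^{\wedge 2}]_{\SHd(S)}$ is trivial, where $U := X \times_{\A^2} \G_m^2 = \Anu \sqcup \G_m$, with components parametrized by $t\mapsto(t,1-t)$ and $s\mapsto(1,s)$ (disjoint because $L_1\cap L_2 = \{(1,0)\}\notin\G_m^2$).

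To extract the Steinberg relation, the triangle $U/pt \to U_+ \to pt_+$ forces the induced map $c_+\colon U_+ \to \G_m^{\wedge 2}$ to factor as $U_+ \xrightarrow{\mathrm{aug}} pt_+ \xrightarrow{\kappa} \G_m^{\wedge 2}$ for some $\kappa$. Under $U_+ = \Anu_+ \vee (\G_m)_+$, the restriction $c_+|_{(\G_m)_+}$ already vanishes at the pointed-space level, since $s\mapsto(1,s)$ lands in $\{1\}\times\G_m \subset \G_m\vee\G_m$, which is collapsed by $(\G_m)_+\wedge(\G_m)_+ \to \G_m\wedge\G_m$. Since the augmentation $(\G_m)_+ \to pt_+$ is split by $1\in\G_m$, this forces $\kappa=0$, hence $c_+=0$; restricting back to $\Anu_+$ yields the desired Steinberg relation.

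The main obstacle I foresee is the bookkeeping in the last step — distinguishing the purely pointed-space vanishing of the $(\G_m)_+$-component from the $\SHd$-level factorization through $pt_+$, and then carefully using splitness of the augmentation to conclude $\kappa = 0$; the geometric ingredients (contractibility of the nodal $X$ and the preimage condition) are routine given the explicit description of $X$.
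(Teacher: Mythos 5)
Your proof is correct and follows essentially the same route as the paper's: you apply Lemma~\ref{sbl:CurveinSH} to the same nodal curve $X=Z((x+y-1)(x-1))\subset\A^2$, obtain the factorization of $c\amalg c'$ through $pt_+$, and kill the constant using that the component $t\mapsto(1,t)$ is already null in $\G_m^{\wedge 2}$ together with the splitting $1\colon pt_+\to(\G_m)_+$ — which is exactly the paper's observation that $c'\circ 1=0$. Your explicit verifications of the $\A^1$-contraction of $X$ and of the condition $X\times_{\A^2}(Z(xy)-Z(x+y-1))=\emptyset$ are correct details the paper leaves implicit.
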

\begin{proof}
Applying lemma \ref{sbl:CurveinSH} to the closed subscheme $X=Z((x-1)(x+y-1))\subset\A^2$
we see that the morphism $c\amalg c^\prime\colon \Anu \amalg \G_m\to \G_m^2$ in $[\big(\Anu \amalg \G_m\big)/pt, \G_m^{\wedge 2}]$, where $c\colon \Anu\to \G_m^2\colon (t)\mapsto (t,1-t)$, and $c^\prime\colon \G_m\to \G_m^2\colon (t)\mapsto (t,1)$, is trivial.
Hence the class $[c\amalg c^\prime]$ of the morphism $c\amalg c^\prime$ is $\SH(S)$ can be passed throw 
$$\big(\Anu \amalg \G_m\big)_+\to pt_+ \to \G_m^2.$$
Now since the class of the composition $c^\prime \circ 1$, where $1\colon pt_+\to (\G_m)_+$ is given by the point $\{1\}$, defines the zero morphism in the group $[pt_+,  \G_m^2]_{\SH(S)}$, 
it follows that $[c\amalg c^\prime]=0\in [\Anu \amalg \G_m, \G_m^2]_{\SH(S)}$.
Hence the class of the morphism $\Anu\to \G_m\colon (t)\mapsto (t,1-t)$ in $[\Anu, \G_m^2]_{\SH(S)}$ is trivial.
\end{proof}

\section{Proof of other relations of Milnor-Witt K-theory}\label{sect:therrrel}

Denote $\SH=\SH(S)$, $Sm=Sm_S$.

\begin{definition}
Let 
$f\in Map(X, Y)$, $g\in Map(X^\prime, Y^\prime)$, for $X,Y,Z\in Sm_S$,  (or
$f\in [X, Y]_{\SH}$, $g\in [X^\prime, Y^\prime]_{\SH}$, for $X,Y,Z\in \SH(S)$,).
\item[$\circ\;\colon $]
Then if $Y=X^\prime$, we can define the composition morphism in $Map(X,Y^\prime)$ (or $[X,Y^\prime]_\SH$) which we denote by $g\circ f$.
\item[$\bullet\;\colon $]
Denote by $f\bullet g\in Map(X\times X^\prime, Y\times Y^\prime)$ 
the (external) product, which we also call as an external composition, the same notation we use for
$f\bullet g\in Hom_{\mathbb ZSm^{Id}}(X\times X^\prime, Y\times Y^\prime)$ or $ [X\wedge X^\prime,Y\wedge Y^\prime]_\SH$;
let us note that we use both notations $f g$ and $f\bullet g$ for the external products in $SH$, but only $f\bullet g$ for $Sm$ and $\mathbb ZSm^{Id}$;
\item[$\Sigma_{\G_m}\colon$] denote $\Sigma^l_{\G} f = id_{\G^l} \bullet f$, $f \Sigma^l_{\G} = f\bullet id_{\G^l}$. 
\item[$\sim^{\G}\colon$] let us write $f\sim^{\G} g$ iff $\Sigma_\G^{l} f = \Sigma_\G^{l^\prime} g$ for some $l, l^\prime\in \mathbb Z$.
\end{definition}
\begin{remark}
For any 
$f\in Map(\G_m^{n}, \G_m^{m^\prime})$
$g\in Map(\G_m^{n^\prime}, \G_m^{m})$,
$ f\Sigma^m_{\G} \circ \Sigma^n_\G g  =  f \bullet g \in Map(\G_m^{n+n^\prime}, \G_m^{m^\prime+m}) $
%
\end{remark}
\begin{remark}
If $f\sim^\G f^\prime$, $g\sim^\G g^\prime$, $f^\prime=g^\prime$ then $f=g$.
\end{remark}
\begin{definition}
Define regular maps 
\item[-] $m\colon \G_m\times\G_m\to \G_m\colon (x,y)\mapsto xy$,
\item[-] $(a)\colon pt\to \G_m\colon pt\mapsto a$,
\item[-] $m_a \colon \G_m\to \G_m\colon x\mapsto ax$.
\par\noindent Define 
\item[-] 
$\eta\in [pt,\G_m^{\wedge -2}]_{\SH}$, $\eta = \Sigma^{-2}_{\G}(m-p_1-p_2)$, $p_1,p_2\colon \G_m\times \G_m\to \G_m$ be the projections, 
\\ 
denote $\Sigma\eta= \Sigma^{2}_\G \eta$, note that $\Sigma\eta$ here is just a symbol;
\item[-] $[a]\in  [pt, \G_m^{\wedge 1}]_{\SH}\colon pt\mapsto a$,
\item[-] $\langle a\rangle \in [pt,pt]_{\SH}$, $\langle a\rangle = \Sigma_\G^{-1}(m^a - (a))$. 
\end{definition}

\begin{definition}
Let $\G_m\simeq \G_m^{\wedge 1} \oplus pt\in \SH$ be the isomorphism  given by the point $1\in \G_m$.
For a regular map $f\in Map(\G_m^{n}, \G_m^{m})$
denote by $\overline f\in 
Hom_{\mathbb ZSm^{Id}}(\G_m^{\wedge n}, \G_m^{\wedge m})$ 
the induced morphism in the Karoubi envelope of the linearisation of $Sm$. 

For any morphism $f\in \mathbb ZSm^{Id}$, $f\in Hom(X,Y)$ denote by $[f]\in [X,Y]_{\SH}$ the class of the morphism in $\SH$. 
\end{definition}
\begin{example}
Let $m\colon \G_m\times\G_m\to \G_m\colon (a,b)\mapsto ab$ be the multiplication morphism, 
\item[-] $[\overline m] = \Sigma^2_{\G}\eta\in [\G_m^{\wedge 2}, \G_m^{\wedge 1}]$, 
\item[-] $[\overline {(a)}] = [a]\in  [pt, \G_m^{\wedge 1}]_{\SH}$;
\item[-] $[\overline{ m_a}] = \Sigma_\G^{1}\langle a\rangle \in  [\G_m^{\wedge 1}, \G_m^{\wedge 1}]_{\SH}$.
\end{example}
\begin{remark}\label{rm:computredcom}
Computing the composition of morphisms $\overline{g}\circ\overline{f}$,
$f\in Map(\G_m^{n}, \G_m^{m})$, $g\in Map(\G_m^{m}, \G_m^{l})$, 
it is suitable to think about the morphisms induced 
by $\overline{f}$ (and $\overline{g}$) in $[\G_m^{n}, \G_m^{m}]_{\SHd}$, 
which is given by the formula $P_n \circ f \circ P_m$ where $P_n=\prod_{i=1\dots n} (id_{\G_m^n} - 1\circ p_i)$, $1\colon pt\to \G_m$, $p_i\colon \G_m^n\to \G_m^{n-1}$ is the projection along the $i$-th multiplicand. 
\end{remark}

\begin{definition}
Let
\item[-] $T$ be the permutation on $\G_m^2$;
\item[-] $H=id_{\G} +m_{-1}\in \mathbb Z{Sm}(\G_m,\G_m)$, 
\item[-] $h=\Sigma^{-1}_\G[\overline H]=\langle 1\rangle +\langle -1\rangle\in [pt,pt]_{\SH}$.
\end{definition}
\begin{remark}\label{rm:Tm}
By commutativity we have $m\circ T=m$.
\end{remark}
\begin{lemma}
\label{lm:Twist}
For a permutation $P\in Aut_{Sm}(\G_m^n)$ with the sign $s$
$$[\overline{P}] = \Sigma_\G^{N}\langle -1\rangle^s=\Sigma_\G^{1}\langle -1\rangle^s\Sigma_\G^{N-1}.$$ 
\end{lemma}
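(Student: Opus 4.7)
The plan is to reduce the general permutation case to a single adjacent transposition, and then prove the transposition case by an elementary-matrix argument on $\A^2$, stabilised by one $S^2$ factor. Any $P\in\Aut_{Sm}(\G_m^n)$ with sign $s$ factors as a product of $k$ adjacent transpositions $\tau_1,\dots,\tau_k$ with $k\equiv s\pmod 2$. Since $\overline{(-)}$ respects composition in $[\G_m^{\wedge n},\G_m^{\wedge n}]_{\SH}$ (cf.\ remark \ref{rm:computredcom}) and since $\Sigma_\G^N\langle -1\rangle^s$ is multiplicative in $s$ because $\langle -1\rangle$ is central, it suffices to prove $[\overline T]=\Sigma_\G^2\langle -1\rangle$ for the single swap $T\colon \G_m^2\to \G_m^2$. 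Smashing with identities on the unchanged coordinates reduces any adjacent transposition on $\G_m^n$ to this case.

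For the core computation, the swap on $\G_m^2$ is the restriction of the $\A^2$-automorphism with matrix $\sigma=\bigl(\begin{smallmatrix}0&1\\1&0\end{smallmatrix}\bigr)\in\GL_2(\Z)$. I would write $\sigma=D\cdot M$ with $D=\bigl(\begin{smallmatrix}-1&0\\0&1\end{smallmatrix}\bigr)$ and $M=D\sigma\in SL_2(\Z)$, and then use the classical fact that $SL_2(\Z)$ is generated by the elementary transvections $E_{ij}(1)$, together with the $\A^1$-homotopies $\lambda\mapsto E_{ij}(\lambda)$ from $\id$ to $E_{ij}(1)$, to produce an $\A^1$-chain of homotopies connecting $M$ to $\id$, hence $\sigma$ to $D$, as endomorphisms of $\A^2$. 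None of these maps preserves $\G_m^2$, but all of them fix $0\in\A^2$, so they induce endomorphisms of the quotient $\A^2/(\A^2-0)\simeq T^{\wedge 2}$ for $T=\A^1/\G_m$, and the homotopy descends to this quotient. Hence $[\sigma]=[D]$ in $[T^{\wedge 2},T^{\wedge 2}]_{\SH}$.

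The final step is to transfer this identity from $T^{\wedge 2}$ to $\G_m^{\wedge 2}$ via the canonical stable equivalence $T\simeq S^1\wedge \G_m$, and hence $T^{\wedge 2}\simeq S^2\wedge \G_m^{\wedge 2}$. On this decomposition, the swap on $T^{\wedge 2}$ factors (after a shuffle $S^1\wedge \G_m\wedge S^1\wedge \G_m\to S^2\wedge \G_m^{\wedge 2}$ whose two commutation morphisms cancel in pairs) as the smash of the swap on $S^2=S^1\wedge S^1$ with the swap on $\G_m^{\wedge 2}$; the $S^2$-swap is classically $-1$ in $\SH$. On the other hand, $D$ acts as multiplication by $-1$ on one $\A^1$ factor and identity on the other, inducing $\Sigma_\G^1\langle -1\rangle$ on one smash factor of $T=S^1\wedge\G_m$ and the identity on the other. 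Equating the two expressions in $[T^{\wedge 2},T^{\wedge 2}]_{\SH}$ and cancelling the $(-1)$ contributed by the $S^2$-swap yields $[\overline T]=\Sigma_\G^2\langle -1\rangle$, as required; the equality $\Sigma_\G^N\langle -1\rangle^s=\Sigma_\G^1\langle -1\rangle^s\Sigma_\G^{N-1}$ in the statement is then automatic from the definition of $\Sigma_\G$ and centrality of $\langle -1\rangle$.

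The main obstacle will be the sign bookkeeping in the shuffle isomorphism $S^1\wedge\G_m\wedge S^1\wedge\G_m\simeq S^2\wedge\G_m^{\wedge 2}$ and in identifying the $D$-action on $T$ with $\Sigma_\G^1\langle -1\rangle$ rather than with some spurious alternative sign distributed between the $S^1$- and $\G_m$-summands of $T$; this is also where the splitting $\G_m\simeq \G_m^{\wedge 1}\oplus pt$ from remark \ref{rm:computredcom} must be tracked carefully, so that $\overline D$ picks up the $\langle -1\rangle$-action precisely on the reduced part.
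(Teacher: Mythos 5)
Your strategy is the same as the paper's: reduce everything to the single swap $T$ on $\G_m^2$, realise it as a linear automorphism of $\A^2/(\A^2-0)=T^{\wedge 2}$, contract the $SL_2(\Z)$-part by the $\A^1$-homotopies $\lambda\mapsto E_{ij}(\lambda)$ (these descend to the quotient because every $E_{ij}(\lambda)$ is invertible and hence preserves $\A^2-0$; merely fixing $0$, which is what you invoke, would not by itself suffice), and so identify the swap on $T^{\wedge 2}$ with the class of $\operatorname{diag}(-1,1)$, i.e.\ with $\langle -1\rangle\cdot\id_{T^{\wedge 2}}$. Up to that point your write-up is a correct expansion of the two-line sketch given in the paper.

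The gap is the very last step, and it concerns exactly the sign that the lemma is about. From $T^{\wedge 2}\simeq S^2\wedge\G_m^{\wedge 2}$ you correctly get that the swap on $T^{\wedge 2}$ is $(\mathrm{swap}_{S^2})\wedge(\mathrm{swap}_{\G_m^{\wedge 2}})=(-1)\cdot\Sigma_{S^2}[\overline T]$. Equating this with $\Sigma_{S^2}\Sigma^2_\G\langle -1\rangle$ gives $[\overline T]=-\Sigma^2_\G\langle -1\rangle$: the $(-1)$ contributed by the $S^2$-swap does not cancel, it simply moves to the other side of the equation. So the identities you display prove $[\overline T]=\Sigma^2_\G(-\langle -1\rangle)$, which is Morel's answer (the switch on $\G_m^{\wedge 2}$ is $\epsilon=-\langle -1\rangle$) and is also the version the paper actually uses downstream: in the proof of Proposition \ref{prop:quad} the identification of $\Sigma^1_\G\overline H$ with $\overline{\id_{\G_m^2}-T}$ requires $\Sigma^2_\G\langle -1\rangle=-[\overline T]$. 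In other words, the formula printed in Lemma \ref{lm:Twist} differs by $(-1)^s$ from what your (and the paper's) argument produces; you cannot both assert your two displayed identifications and conclude the formula as stated. Either the statement should carry $(-\langle -1\rangle)^s$ in place of $\langle -1\rangle^s$, or you must supply a genuine reason for the disappearance of the $(-1)^s$, which your proof does not contain.
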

\begin{proof}
Since $\G_m^{\wedge l}\wedge S^l = T^{\wedge l}=\A^l/(\A^l-0)$ the claim follows from the fact that any permutation defines the matrix in the subgroup in $\GL(\mathbb Z)$ generated by elementary matrices and the matrix the diagonal matrix $(-1,1,\dots 1)$. Let us note in addition that the general case follows from the case of the twist on $\G^2_m$. 
 linear homomorphism is equal to  
\end{proof}

\newcommand{\sign}{\operatorname{sign}}

\begin{proposition}\label{prop:composashort} 
For any $f \in [\G_m^{\wedge n},\G_m^{\wedge m^\prime}]_{\SH}$, $g\in [\G_m^{\wedge m}, \G_m^{\wedge n}]_{\SH}$,
we have 
$$f\circ g\sim^\G \langle -1\rangle^{m  n} f \bullet g
\sim^\G  \langle -1\rangle^{m(m^\prime+1)} (g \circ \Sigma^{m-m^\prime}_\G f)$$
\end{proposition}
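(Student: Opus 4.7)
The plan is to derive both equivalences from two structural facts about the symmetric monoidal category $\SH(S)$: the interchange identity $(a_1 \bullet a_2) \circ (b_1 \bullet b_2) = (a_1 \circ b_1) \bullet (a_2 \circ b_2)$, which in particular gives the decomposition $f \bullet g = (f \bullet \id_{\G_m^{\wedge n}}) \circ (\id_{\G_m^{\wedge n}} \bullet g)$ already used in the remark after Def.\ 3.1; and the naturality of the symmetry $\sigma_{A,B}\colon A \wedge B \to B \wedge A$, which reads $\sigma_{A',B'} \circ (f \bullet g) = (g \bullet f) \circ \sigma_{A,B}$. Lemma \ref{lm:Twist} then converts each twist $\sigma_{\G_m^{\wedge a}, \G_m^{\wedge b}}$ into the explicit element $\langle -1 \rangle^{ab} \in [\pt,\pt]_\SH$, modulo a $\Sigma_\G$-suspension absorbed by $\sim^\G$, and $\langle -1 \rangle^2 = 1$ collapses every accumulated sign exponent to its parity.

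For the first equivalence $f \circ g \sim^\G \langle -1 \rangle^{mn}(f \bullet g)$ the idea is to compare
\[\Sigma^n_\G(f \circ g) = (\id_{\G_m^{\wedge n}} \bullet f) \circ (\id_{\G_m^{\wedge n}} \bullet g)\quad\text{with}\quad f \bullet g = (f \bullet \id_{\G_m^{\wedge n}}) \circ (\id_{\G_m^{\wedge n}} \bullet g),\]
which share their right-hand factor. The two left-hand factors differ by a single naturality square for $\sigma$, namely $\sigma_{\G_m^{\wedge n}, \G_m^{\wedge m'}} \circ (\id \bullet f) = (f \bullet \id) \circ \sigma_{\G_m^{\wedge n}, \G_m^{\wedge n}}$. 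Pushing the resulting twist $\sigma_{\G_m^{\wedge n}, \G_m^{\wedge n}}$ past $\id \bullet g$ by a second naturality step and applying Lemma \ref{lm:Twist} to the two remaining twists yields an equality of the form $f \bullet g \sim^\G \langle -1 \rangle^{e}\,\Sigma^n_\G(f \circ g)$ where $e$ is a sum of several products of dimensions. A short parity calculation then identifies $e \equiv mn \pmod 2$.

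For the second equivalence $\langle -1 \rangle^{mn}(f \bullet g) \sim^\G \langle -1 \rangle^{m(m'+1)}(g \circ \Sigma^{m-m'}_\G f)$ the strategy is symmetric but now moves $f$ across $g$. Naturality of $\sigma$ gives $f \bullet g \sim^\G \langle -1 \rangle^{e'}(g \bullet f)$, and the remark after Def.\ 3.1 rewrites $g \bullet f = g\Sigma^n_\G \circ \Sigma^m_\G f$ as a genuine composition in $\SH(S)$. A further instance of naturality converts the suspension $\Sigma^m_\G f$ into $\Sigma^{m-m'}_\G f$ at the cost of one more twist $\sigma_{\G_m^{\wedge m'}, \G_m^{\wedge m'}}$, and collecting all the sign contributions (again reduced modulo $2$) produces the stated exponent $m(m'+1)$.

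The main obstacle is the sign bookkeeping: every application of naturality introduces a twist whose class under Lemma \ref{lm:Twist} contributes $\langle -1 \rangle^{ab}$ for varying $a,b$, and one has to verify that after absorbing all residual $\G$-suspensions via $\sim^\G$ and reducing modulo $2$, the accumulated exponent is exactly the claimed one. A subtler point is that Lemma \ref{lm:Twist} identifies $\sigma_{\G_m^{\wedge a}, \G_m^{\wedge b}}$ with $\langle -1 \rangle^{ab}$ only up to a $\Sigma_\G$-suspension, so one must check that the residual $\Sigma_\G$'s can be commuted through the sign factors during the collapsing step; this follows from $\langle -1 \rangle \in [\pt,\pt]_\SH$ acting centrally on every Hom-group of $\SH(S)$.
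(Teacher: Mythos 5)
Your strategy coincides with the paper's: both reduce $\circ$ to $\bullet$ by comparing the decompositions $f\bullet g=(f\bullet\id_{\G_m^{\wedge n}})\circ(\id_{\G_m^{\wedge n}}\bullet g)$ and $\Sigma^{n}_{\G}(f\circ g)=(\id_{\G_m^{\wedge n}}\bullet f)\circ(\id_{\G_m^{\wedge n}}\bullet g)$, inserting block permutations (the paper's $\check P,\hat P$ are exactly your symmetry isomorphisms) and evaluating each of them as a power of $\langle -1\rangle$ via Lemma \ref{lm:Twist}. So there is no methodological divergence to report; the issue is that the only nontrivial content of the proposition, the exponent, is asserted rather than computed, and the route you describe does not yield $mn$.

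Concretely, the one naturality square you need reads $f\bullet\id_{\G_m^{\wedge n}}=\sigma_{\G_m^{\wedge m'},\G_m^{\wedge n}}^{-1}\circ(\id_{\G_m^{\wedge n}}\bullet f)\circ\sigma_{\G_m^{\wedge n},\G_m^{\wedge n}}$, and by Lemma \ref{lm:Twist} the two twists are the central scalars $\langle -1\rangle^{nm'}$ and $\langle -1\rangle^{n^{2}}=\langle -1\rangle^{n}$. (Your proposed ``second naturality step'', pushing $\sigma_{\G_m^{\wedge n},\G_m^{\wedge n}}$ past $\id\bullet g$, would convert $\id\bullet g$ into $g\bullet\id$ and destroy the shared right-hand factor; that twist must be evaluated as a scalar, not transported.) Collecting signs gives $f\circ g\sim^{\G}\langle -1\rangle^{n(m'+1)}f\bullet g$, and $n(m'+1)\not\equiv mn\pmod 2$ in general. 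The exponent $mn$ in fact cannot be correct: taking $f=g=\id_{\G_m}$ (so $n=m=m'=1$) it would assert $\id_{\G_m}\sim^{\G}\langle -1\rangle\bullet\id_{\G_m}$, i.e.\ $\langle 1\rangle=\langle -1\rangle$, which already fails over $\mathbb{Q}$; the exponent $n(m'+1)$ passes this test and also the test $f=\id_{\G_m^{\wedge n}}$, where $f\circ g=g\sim^{\G}\Sigma^{n}_{\G}g=f\bullet g$ with no sign. The exponent $m(m'+1)$ of the second equivalence must then be recomputed by applying the corrected first equivalence to the pair $(g,\Sigma^{m-m'}_{\G}f)$ together with the permutation relating $g\bullet\Sigma^{m-m'}_{\G}f$ to $f\bullet g$. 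None of this affects the later applications (Lemma \ref{lm:ProdRel}, Propositions \ref{prop:CommRel} and \ref{prop:quad}), where both the stated and the corrected exponents are even; but you cannot leave the ``short parity calculation'' as an assertion, since carrying it out is precisely where the statement as written breaks down.
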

\begin{proof}
The first equivalence follows form
\begin{multline*}
f\circ g \sim^\G \Sigma^{1+n}_\G f\circ \Sigma^{1+n}_\G g = \\ \Sigma^n_\G f\circ \check  P\circ \Sigma^n_\G g \circ \hat P =
\Sigma^n_\G f\circ (\langle -1\rangle^{n(n+1)}\Sigma_\G^{2n})\circ \Sigma^n_\G g \circ (\langle -1\rangle^{n m}\Sigma_\G^{2n})=\\
\langle -1\rangle^{n m }\bullet f \bullet g
\end{multline*}
where $\check P\colon \G^{\wedge n}_m\wedge \G^{\wedge n+1}_m\to\G^{\wedge n+1}_m\wedge \G^{\wedge n}$,
and $\hat P\colon \G^{\wedge n}_m\wedge \G^{\wedge n+1}_m\to\G^{\wedge n+1}_m\wedge \G^{\wedge n}$. 
Note that the sign of the permutation $(1,\dots l,l+1\dots l+k)\to (l+1,\dots l+k, 1\dots l)$ is equal to 
$l(l+k+1)=l^2+l(k+1)=l(1+k+1)=lk(mod 2)$, $\forall l,k\in \mathbb Z_{\geq 0}$.
The second equivalence follows form the first one applied to $\Sigma^{m-m^\prime}_\G f$ and $g$.
\end{proof}
\begin{remark}
The sign in \ref{cor:composa}(2) is the only one sign which we essentially use in the proof of relation of Milnor-Witt K-theory in $\SH$.
\end{remark}
The first relation in the list \eqref{eq:SHK^MWrelations} follows almost tautologically form proposition \ref{prop:composa} and the definition of $\eta$.
\begin{lemma}\label{lm:ProdRel} The following equality for morphisms in $\SH$ holds
$$\Sigma\eta\bullet[a]\bullet[b] = [p\circ (m-\overline{p_1}-\overline{p_2})]\in [\G_m\wedge\G_m, \G_m^{\wedge 1}]_{\SH},$$ where $p_1,p_2\colon \G_m^2\to \G_m$ are projections, and 
where $a,b,p\colon \G_m\to \G_m^{\wedge 1}$ denotes three copies of the canonical projection. 
\end{lemma}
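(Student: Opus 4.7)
The plan is to observe that the stated equality is the universal/tautological manifestation of the identity $\Sigma\eta = [\overline m]$ (recorded in the Example following the definition of $\eta$), together with the vanishing of the reduced projections $\overline{p_1} = \overline{p_2} = 0$ as morphisms $\G_m^{\wedge 2} \to \G_m^{\wedge 1}$ in the Karoubi envelope of the linearisation of $Sm_S$.

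First I would unpack the left-hand side. By the Definition, $[a]$ and $[b]$ are copies of the canonical projection $\G_m \to \G_m^{\wedge 1}$ induced by the splitting $\G_m \simeq \G_m^{\wedge 1} \oplus pt$, so $[a] \bullet [b] \colon \G_m \wedge \G_m \to \G_m^{\wedge 2}$ is the canonical identification arising from this splitting. By the Example recorded before Remark~\ref{rm:computredcom}, $\Sigma\eta = \Sigma^2_\G \eta = [\overline m] \in [\G_m^{\wedge 2}, \G_m^{\wedge 1}]_{\SH}$. Using the bifunctoriality of $\bullet$ (together with the identification above) the left-hand side is therefore the class of $[\overline m]$ in $[\G_m \wedge \G_m, \G_m^{\wedge 1}]_{\SH}$, i.e.\ the class of $p \circ m$ after reduction by the idempotent $P_2$ on the source described in Remark~\ref{rm:computredcom}.

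Second I would handle the right-hand side. The reduced classes $\overline{p_i} \colon \G_m^{\wedge 2} \to \G_m^{\wedge 1}$ vanish: with the idempotent $P_2 = (\id - 1 \circ p_1)(\id - 1 \circ p_2)$ of Remark~\ref{rm:computredcom}, a direct computation on formal inputs $(a,b) \mapsto (a,b)-(1,b)-(a,1)+(1,1)$ yields $p_i \circ P_2 = 0$, hence $\overline{p_i} = P_1 \circ p_i \circ P_2 = 0$. Consequently $[\overline{m - p_1 - p_2}] = [\overline m]$, and the right-hand side $[p \circ (m - \overline{p_1} - \overline{p_2})]$, interpreted in the Karoubi envelope and passed to $\SH$, is the class obtained after applying $P_2$ on the source, which equals the reduced class of $p \circ m$ and thus the left-hand side.

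The only mildly delicate step is the careful bookkeeping of how the idempotent $P_2$ interacts with the formal subtractions in $m - \overline{p_1} - \overline{p_2}$ and with postcomposition by $p$; however this is a direct linear computation in the Karoubi envelope, not a conceptual obstacle. Once it is carried out, both sides are identified with $[\overline m]$ in $[\G_m \wedge \G_m, \G_m^{\wedge 1}]_{\SH}$, and the lemma follows essentially tautologically from the definition of $\eta$.
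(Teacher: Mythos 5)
Your proposal follows essentially the same route as the paper: identify $\Sigma\eta$ with $[\overline m]$, turn the external product into a composition, and finish by a linear computation with the idempotent of Remark~\ref{rm:computredcom}; your explicit check that $\overline{p_1}=\overline{p_2}=0$ is a welcome detail the paper leaves implicit. The one place where your justification is too thin is the passage from $\Sigma\eta\bullet([a]\bullet[b])$ to $\Sigma\eta\circ([a]\bullet[b])$, which you attribute to ``bifunctoriality of $\bullet$''. Bifunctoriality only gives $\Sigma\eta\bullet h=(\Sigma\eta\,\Sigma^2_{\G})\circ(\Sigma^2_{\G}h)$, with the two $\G_m$-suspensions sitting on opposite sides; these live in $[\G_m^{\wedge 2}\wedge\G_m\wedge\G_m,\G_m^{\wedge 3}]$, not in $[\G_m\wedge\G_m,\G_m^{\wedge 1}]$, and identifying the former with a suspension of the composition requires permuting smash factors, i.e.\ Lemma~\ref{lm:Twist} and the sign bookkeeping of Proposition~\ref{prop:composashort} (this is exactly what the paper cites here). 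The step does not fail --- the relevant sign $\langle-1\rangle^{mn}$ is trivial because the exponent is even --- but you should invoke that proposition (or redo its permutation argument) rather than bifunctoriality alone.
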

\begin{proof}
It follows from prop \ref{prop:composa}(1) that $$
\Sigma\eta\bullet[a]\bullet[b]
\stackrel{cor \ref{cor:composa}(2)}=
\Sigma\eta\circ([a]\bullet[b])=[\overline{m}\circ \overline{((a)\bullet (b))}]=p\circ (m-\overline{p_1}-\overline{p_2})
$$
where $\Sigma\eta = \Sigma_\G^2\eta$.
\end{proof}

Now we prove the other two relation.

\begin{proposition}\label{prop:CommRel}
The following equality holds
$$[a]\Sigma\eta =\Sigma\eta[a]\in [\G_m^{\wedge 2}, \G_m^{\wedge 2}]_{\SH}.$$ 
\end{proposition}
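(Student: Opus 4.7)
The plan is to show that $[a]\bullet\Sigma\eta$ and $\Sigma\eta\bullet[a]$ are related by the twist on the target $\G_m^{\wedge 1}\wedge\G_m^{\wedge 1}$, and then to observe that this twist acts trivially because $\Sigma\eta$ is invariant under $\langle -1\rangle$ thanks to the commutativity $m\circ T=m$ (Remark~\ref{rm:Tm}).

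First, by naturality of the symmetry in the symmetric monoidal category $\SHd(S)$, one has
$$\Sigma\eta\bullet[a]=T\circ([a]\bullet\Sigma\eta),$$
since the source twist on $pt_+\wedge\G_m^{\wedge 2}$ is the canonical unit isomorphism, hence the identity under the standard identifications $pt_+\wedge X\simeq X\simeq X\wedge pt_+$. By Lemma~\ref{lm:Twist}, $[T]=\Sigma^2_\G\langle -1\rangle$, so the question reduces to showing that $\langle -1\rangle\cdot([a]\bullet\Sigma\eta)=[a]\bullet\Sigma\eta$, where $\langle -1\rangle\in[pt_+,pt_+]_{\SHd}$ acts by scalar multiplication on morphisms via composition with $\Sigma^n_\G\langle -1\rangle$ on either side.

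Since $\langle -1\rangle$ is a scalar coming from the monoidal unit, it commutes with external products and can be absorbed into either factor, giving
$$\langle -1\rangle\cdot([a]\bullet\Sigma\eta)=[a]\bullet(\langle -1\rangle\cdot\Sigma\eta).$$
Remark~\ref{rm:Tm} gives $m\circ T=m$, hence $\overline m\circ[T]=\overline m$, and Lemma~\ref{lm:Twist} converts this into $\Sigma\eta\circ\Sigma^2_\G\langle -1\rangle=\Sigma\eta$, i.e.\ $\langle -1\rangle\cdot\Sigma\eta=\Sigma\eta$. Chaining the equalities we obtain
$$\Sigma\eta\bullet[a]=T\circ([a]\bullet\Sigma\eta)=\langle -1\rangle\cdot([a]\bullet\Sigma\eta)=[a]\bullet(\langle -1\rangle\cdot\Sigma\eta)=[a]\bullet\Sigma\eta,$$
which is the desired identity.

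The main difficulty is purely notational: one must carefully distinguish the source twist (involving the monoidal unit $pt_+$, hence trivial) from the target twist (nontrivial on $\G_m\wedge\G_m$), and recognize that the sign $\langle -1\rangle$ produced on the target by Lemma~\ref{lm:Twist} is precisely absorbed by the $\langle -1\rangle$-invariance of $\Sigma\eta$ coming from the commutativity of $m$. Alternatively, the same computation can be repackaged through Proposition~\ref{prop:composashort}, which encodes exactly this relationship between external products and compositions together with their twist-signs, and reduces the proof to symbolic manipulations with the same two inputs ($[T]=\Sigma^2_\G\langle -1\rangle$ and $m\circ T=m$).
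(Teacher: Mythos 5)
Your proof is correct and follows essentially the same route as the paper's: both reduce the commutation to the naturality of the symmetry (the source twist being trivial because of the $pt_+$ factor), identify the resulting target twist $T$ with $\Sigma^2_\G\langle -1\rangle$ via Lemma~\ref{lm:Twist}, and cancel it using $m\circ T=m$ from Remark~\ref{rm:Tm}. The only difference is cosmetic: the paper moves the postcomposition with $\overline T$ to a precomposition via Proposition~\ref{prop:composashort} before absorbing it into $\overline m$, whereas you absorb the same twist as a central scalar $\langle -1\rangle$ acting on the $\Sigma\eta$ factor.
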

\begin{proof} 
\[
[a] \eta =
[(\overline{a})\bullet \overline{m}] \stackrel{}{=}
[\overline{T}\circ(\overline{m}\bullet(\overline{a}))]\stackrel{prop \ref{prop:composashort}
}{=}
[(\overline{m}\bullet\overline{(a)})\circ \overline{T} ]\stackrel{}{=}
[(\overline{m}\circ \overline{T})\bullet\overline{(a)}]\stackrel{rem\ref{rm:Tm}}{=}
[\overline{m}\bullet\overline{(a)}]=
\eta [a]
\]
\end{proof}
\begin{proposition}\label{prop:quad}
The following equality holds
$$\eta^2[-1] + 2\eta = 0\in [pt,\G_m^{\wedge -2}]_{\SH}.$$
\end{proposition}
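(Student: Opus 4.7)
Following the template of proposition~\ref{prop:CommRel}, I would combine the commutativity $m\circ T = m$ (remark~\ref{rm:Tm}) with the twist identification of lemma~\ref{lm:Twist} to derive the relation. The intermediate target is the identity $\eta\cdot\langle-1\rangle = -\eta$ in $\pi_0$, equivalent to $\eta(1+\langle-1\rangle) = 0$, which then unfolds to the stated $\eta^2[-1]+2\eta = 0$ via the identification $\langle-1\rangle = 1+\eta[-1]$.

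First, the identity $m\circ T = m$ descends to $\overline m\circ\overline T = \overline m$ in $\mathbb{Z} Sm^{Id}$, since $T$ permutes $p_1$ and $p_2$ and therefore commutes with the reduction idempotent $e_2 = (\id-1\circ p_1)(\id-1\circ p_2)$ on $\G_m^2$. Taking classes in $\SH(S)$, substituting $[\overline m] = \Sigma^2_\G\eta$ (by definition) and $[\overline T] = \Sigma^2_\G\langle-1\rangle$ (lemma~\ref{lm:Twist}), and using the monoidal functoriality $(\id\bullet f)\circ(\id\bullet g) = \id\bullet(f\circ g)$, the composition on the left rewrites as $\Sigma^2_\G(\eta\cdot\langle-1\rangle)$. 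The identity $\overline m\circ\overline T = \overline m$ then reads, after accounting for the $-1$ sign from the $S^1$-twist implicit in the $S^2$-stabilization flagged in the strategy section,
\[
\Sigma^2_\G(\eta\cdot\langle-1\rangle) \;=\; -\Sigma^2_\G\eta,
\]
whence $\eta\cdot\langle-1\rangle = -\eta$ after desuspending via $\sim^\G$.

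Second, I would unfold using $\langle-1\rangle = 1+\eta[-1]$, an identity in $\pi_0$ that follows from the definitions of $\langle-1\rangle$ and $\eta$ together with lemma~\ref{lm:ProdRel} applied after specialization. Substituting,
\[
0 \;=\; \eta\bigl(1+\langle-1\rangle\bigr) \;=\; \eta\bigl(2+\eta[-1]\bigr) \;=\; 2\eta+\eta^2[-1],
\]
which is the claimed relation.

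The main obstacle I anticipate is the sign bookkeeping in the first step. Without the $-1$ coming from the $S^1$-twist in the $S^2$-stabilization, the same computation would produce the strictly weaker $\eta\langle-1\rangle = \eta$, i.e.\ $\eta^2[-1] = 0$, which together with the desired relation would force $2\eta = 0$ --- not generally true. Thus the delicate point is to verify that the effective twist $[\overline T]$ on $\G_m^{\wedge 2}$ carries an extra $-1$ from the $S^1$ factor in the identification $T^{\wedge 2}\simeq S^2\wedge\G_m^{\wedge 2}$, consistently with Morel's graded commutativity in $\SH$, so that in effect $[\overline T] = -\Sigma^2_\G\langle-1\rangle = \Sigma^2_\G\epsilon$ where $\epsilon := -\langle-1\rangle$.
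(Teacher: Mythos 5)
Your proposal is correct and rests on the same two pillars as the paper's own proof, namely remark~\ref{rm:Tm} ($m\circ T=m$) and the identification of the twist on $\G_m^{\wedge 2}$ from lemma~\ref{lm:Twist}, but it organizes the computation differently. The paper first expands $\overline{m}\circ(\overline{m}\bullet\overline{(-1)})+2\overline{m}$ explicitly in the Karoubi envelope of the linearisation of $Sm$ (via remark~\ref{rm:computredcom}) and recognizes the result as $\overline{H}\circ\overline{m}$, so that $\eta^2[-1]+2\eta=h\eta$; it then kills $h\eta$ by commuting the composition with proposition~\ref{prop:composashort} and invoking $m\circ T=m$. You instead prove $\eta\epsilon=\eta$ (equivalently $\eta\langle-1\rangle=-\eta$) directly from $m=m\circ T$ and then unfold through $\langle-1\rangle=1+\eta[-1]$; this is essentially Morel's arrangement of the same facts. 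What your route buys is that it avoids the explicit expansion of $\overline{m}\circ(\overline{m}\bullet\overline{(-1)})$; what it costs is the identity $\langle a\rangle=1+\eta[a]$, which is nowhere stated in the paper and which you only gesture at. It is not a real gap, since it follows from a reduced-class computation of exactly the type the paper performs, namely $\Sigma^1_\G(\eta[a])=\overline{m}\circ(\overline{(a)}\bullet\overline{\id})=\overline{m_a}-\overline{\id_{\G_m}}=\Sigma^1_\G(\langle a\rangle-\langle 1\rangle)$ in $\mathbb{Z}Sm^{Id}$, but you should write it out rather than cite lemma~\ref{lm:ProdRel} ``after specialization.'' Your sign analysis is the right one and is in fact the delicate point of the whole proposition: the class of the twist on $\G_m^{\wedge 2}$ (as opposed to $T^{\wedge 2}=S^2\wedge\G_m^{\wedge 2}$) is $\epsilon=-\langle-1\rangle$, the extra $-1$ coming from the swap of the two $S^1$ factors. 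Note that the paper's final step implicitly uses the same convention, since rewriting $\Sigma^1_\G\overline{H}=\overline{\id}+\Sigma^1_\G\overline{m_{-1}}$ as $\overline{\id_{\G_m^2}}-\overline{T}$ in $\SH$ requires $[\overline{T}]=-\Sigma^2_\G\langle-1\rangle$ rather than the sign printed in lemma~\ref{lm:Twist}; so your reading resolves, rather than contradicts, the paper's bookkeeping.
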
\begin{proof}
Recall $\Sigma\eta = \Sigma_\G^2\eta$. 
Using prop \ref{prop:composashort} 
we have 
$$\Sigma\eta (\Sigma\eta[-1])=
[\overline{m}\bullet (\overline{m}\bullet\overline{(-1)})]
\stackrel{cor \ref{cor:composa}(2)}{=}
[\overline{m}\circ (\overline{m}\bullet\overline{(-1)})],$$
and 
the straightforward computation in the Karoubi envelope of the linearisation of $Sm$ in view of rem \ref{rm:computredcom} shows that
$$\overline{m}\circ (\overline{m}\bullet[-1])\colon \G_m^2\to \G_m\colon (x,y)\mapsto (-xy)-(-x)-(-y)-(xy)+(x)+(y)-(-1).$$
So $\eta\bullet (\eta\bullet[-1]+2id_{\G_m^{\wedge 2}})=\Sigma^{-2}_{\G}[\overline{m\circ (m\bullet (-1) )+2m}]$, and
\begin{multline*}
\overline{m}\circ (\overline{m}\bullet \overline{(-1)} )+2\overline{m}\colon \G_m^2\to \G_m\colon \\
(x,y)\mapsto (-xy)+(xy)-((-x)+(x))-((-y)+(y)) -((-1)+(1))=\\ \overline{H}((xy)-(x)-(y)+(1))=\overline{H}(\overline{m}(x,y)).
\end{multline*}
Thus we have got
$$\eta\bullet\eta\bullet[-1]+2\eta = \Sigma^{-2}_\G[\overline{H}\circ \overline{m}] \; (\,\stackrel{prop \ref{prop:composashort} }{=} h \eta\,).$$
Now 
we see 
$$
[\overline{H}\circ \overline{m}]
\stackrel{prop\ref{prop:composashort}
}{=} [\overline{m}\circ \Sigma_{\G_m}^1\overline{H}] 
\stackrel{lm\ref{lm:Twist}}{=} [\overline{m}\circ \overline{(id_{\G_m^2} - T)}] \stackrel{rem\ref{rm:Tm}}{=} 0$$
\end{proof}

\section{The homomorphism $K^{MW}_n(S)\to H^0(\mathbb ZF(\Delta_S,\G_m^n))$.}

In the section we lift the homomorphism 
$K^{MW}_n(S)\to \pi^{n,n}_S$ to the level of framed correspondences.

Let us briefly recall definition of the category $\SH^{fr}(S)$, see \cite{ElHoKhSoYa-MotDeloop}.
Consider the $\infty$-category of additive presheaves of $S^1$-spectra with framed transfers $Pre^{\Sigma}(Corr^{fr}_S)$ over $S$.
Let $\SH^{fr}_{\A^1,S^1}=Pre^{\Sigma}_{\A^1}(Corr^{fr}_S)$ denotes the localisation with respect to morphisms $\A^1\to X$.
Let $\SH^{fr}_{\A^1}$ be the stabilisation of $\SH^{fr}_{\A^1,S^1}$ with respect to $\G_m$.
Then it follows form the usual (simplicial or topological) Hurevich isomorphism that $[pt, Y]_{\SH^{pre}_{\A^1}}=H^0(\mathbb ZF(\Delta_S,Y)$.


Since any regular map gives us a framed correspondences and since by the Cancellation theorem \cite{AGP-CancTh}
we have $[X\wedge \mathbb G_m, Y\wedge \mathbb G_m]_{\SH^{fr}(S)}$
we can consider the right side of the assignment \eqref{eq:assign}
$$\begin{array}{lclcll}
[x]&\mapsto& [pt\mapsto x]&\in& [pt,\G_m^{\wedge\phantom{-} 1}]_{\SH^{fr}(S)}\\
\eta&\mapsto& \Sigma^{-2}_{\G}[m-p_1-p_2]&\in& [pt,\G_m^{\wedge -1}]_{\SH^{fr}(S)}, 
\end{array}$$
where $m\colon \G_m^2\to \G_m\colon (x,y)\mapsto xy$, and $p_1,p_2\colon \G^2_m\to \G_m$ are the projections,
as morphisms in $\SH^{fr}(S)$.

\begin{proposition}
The similar assignment as \eqref{eq:assign} induces the homomorphism 
$$K^{MW}_n(S)\to H^0(\mathbb ZF(\Delta_S,\G_m^n)).$$
\end{proposition}
\begin{proof}
(\textbf{Steinberg relation}) In the proof of the Steinberg relation in $\SH(S)$ sublemma \ref{sbl:CurveinSH} we have essentially uses Zariski excision isomorphisms with respect to 
\begin{equation}\label{eq:ZarA1_A1}
(\A^1_{S\times\A^1},\A^1_{S\times\A^1}-0)\leftarrow(\A^1_{S\times\A^1}-Z,A^1_{S\times\A^1}-(0_S \amalg Z))
\end{equation}in the second two last rows of the diagram \eqref{eqdiag:StRelEq} applied to 
the morphism $\Anu\to \G_m\colon (t)\mapsto (t,1-t)$ as in prop \ref{prop:StRel}, and 
\begin{equation}\label{eq:ZarA2}(\A^2-0,\A^2-(1\times\A^1)) \leftarrow (\A^2-(\A^1\times 1), \A^2-(1\times\A^1\amalg \A^1\times 1)) \end{equation}  
in the second row.
Now let us see that lemma \ref{lm:ZarEx} yields that  \eqref{eq:ZarA1_A1} and \eqref{eq:ZarA2} are equivalences in $\SH^{fr}_{\A^1}$.
Actually the first case is immediate. In the case of \eqref{eq:ZarA2} it is enough to turn the picture 
and the consider the projection $\A^2\to \A^1\colon (x,y)\mapsto (x-y)$. Then \eqref{eq:ZarA2} becomes the particular case of the \eqref{eq:ZarA1_A1}.

(\textbf{Other relations}) Other relations in $\SH^{fr}$ follows by the same arguments as in section \ref{sect:therrrel}, all what we need that 
it follows form \cite{AGP-CancTh}
the permutation morphism on $\Gm^2$ is equal to $\langle -1\rangle$, $\A^1$-homotopy, where $\langle -1\rangle$ denotes the class of the framed corr. $(0,-t,pr)$ in $Fr_1(pt,pt)$. 
\end{proof}

\begin{lemma}\label{lm:ZarEx}

For any homotopy invariant presheaf $F$ over a base $S$ and closed subschemes $Z_1,Z_2\subset \A^1_S$ finite surjective over $S$, $Z_1\cap Z_2=\emptyset$ 
the canonical homomorphism 
$F(\A^1_S-(Z_1\cup Z_2))/F(\A^1_S-Z_2) = F(\A^1_S-Z_1)/F(\A^1_S)$

In other words 
the canonical homomorphism 
$i\colon 
\A^1_S-Z_1/\A^1_S-(Z_1\cup Z_2) \to
\A^1_S/\A^1_S-Z_2 
$ 
is an equivalence in $\SH^{fr}$,
where $\A^1_S-Z_1/\A^1_S-(Z_1\cup Z_2) $ and $\A^1_S/\A^1_S-Z_2 $ denotes the cones.
\end{lemma}
\begin{proof}
For any scheme $X$ over $S$
a function $\phi\in \mathcal O(\A^1_X)$ such that $Z(\phi)$ is finite over $X$. 
we can define a framed correspondence $(Z(\phi), \A^1_{X}, \phi, pr)\in Fr(X,\A^1_S)$, where $\pr\colon \A^1\times X\to \A^1_S$.
Then for a given section $s\in \Gamma(\A^1_X,\mathcal O(n))$ such that $s\big|_{\infty\times X}$ is invertible we can apply the construction to the function $s/t_\infty^n$. Denote the resulting correspondence by $\langle s\rangle$.

Moreover if $E\subset X$ $D_1,D_2\subset \A^1_S$ are a closed subschemes, and
$s\big|_{X\times_S D_1}$ and $s\big|_{(X-E)\times D_2}$ are invertible then the construction $\langle s\rangle$ gives us the correspondence between pairs, i.e. an element in $Fr((X,X-E), (\A^1-D_1,A^1-(D_1\cup D_2))$.

Let  $\delta\in\Gamma(\PP^1_{\A^1\times S},\mathcal O(1))$, $Z(\delta)$ is the diagonal in $\A^1_{\A^1\times S}$. Then 
in view of the described construction $\langle \delta\rangle$ is equal to the $\sigma$-suspension of the identity element 
in $Fr(\A^1_S,\A^1_S)$ and consequently the identity elements in $Fr_1((\A^1_S-Z_1,\A^1-(Z_1\cup Z_2)), (\A^1_S-Z_1,\A^1-(Z_1\cup Z_2)) )$ and $Fr_1((\A^1_S,\A^1_S-Z_2),(\A^1_S,\A^1_S-Z_2))$.

By Serre theorem for large enough $n$ we find a sections 
$$
s\in\Gamma(\PP^1_{\A^1\times S},\mathcal O(n)),
s^\prime\in\Gamma(\PP^1_{\A^1\times S},\mathcal O(n-1))
$$
such that
$$\begin{array}{ccc}
s\big|_{\infty\times S\times \A^1} = t_0^n\big|_{\infty\times S\times \A^1},& 
s\big|_{Z_1\times\A^1} = t_\infty^n\big|_{Z_1\times\A^1},& 
s\big|_{Z_2\times \A^1} = \delta t_\infty^{n-1}
\\
s^\prime\big|_{\infty\times S\times \A^1} = t_\infty^{n-1}\big|_{\infty\times S\times \A^1},&& 
s^\prime\big|_{Z_2\times \A^1} = t_\infty^{n-1}
.\end{array}$$ 
Then
$\langle s^\prime \rangle\in Fr_1((\A^1_S,\A^1_S-Z_2),(\A^1_S,\A^1-Z_2))$ is equal to zero, 
and
$$\langle s \rangle\in Fr_1((\A^1_S,\A^1_S-Z_2),(\A^1_S-Z_1,\A^1-(Z_1\cup Z_2)))$$ 
is a left inverse up to a suspension to the canonical morphism in $i\colon (\A^1_S-Z_1,\A^1-(Z_1\cup Z_2)))\to (\A^1_S,\A^1_S-Z_2)$, where the homotopy between $\sigma id_{(\A^1_S,\A^1_S-Z_2)} $ and $i \circ \langle s \rangle$ is given by $$\langle \alpha s + (1-\alpha)\delta s^\prime \rangle\in Fr_{1}((\A^1_S,\A^1_S-Z_2)\times\A^1, (\A^1_S,\A^1_S-Z_2)).$$

On other side by Serre theorem for a large enough $n$ we find a sections
$$
s\in\Gamma(\PP^1_{\A^1\times S},\mathcal O(n)),
s^\prime\in\Gamma(\PP^1_{(\A^1_S-Z_1)},\mathcal O(n-1))
$$
such that
$$\begin{array}{lll}
s\big|_{\infty\times S\times \A^1} = t_0^n\big|_{\infty\times S\times \A^1},& 
s\big|_{Z_1\times\A^1} = t_\infty^n\big|_{Z_1\times\A^1},& 
s\big|_{Z_2\times \A^1} = \delta t_\infty^{n-1}
\\
s^\prime\big|_{\infty\times (\A^1_S-Z_1)} = t_\infty^{n-1}\big|_{\infty\times S\times \A^1},&
s^\prime\big|_{Z_1\times_S (\A^1_S-Z_1)} = t_\infty^n \delta^{-1}\big|_{Z_1\times_S(\A^1_S-Z_1)},& 
s^\prime\big|_{Z_2\times_S (\A^1_S-Z_1)} = t_\infty^{n-1}
.\end{array}$$ 
Then
$\langle s^\prime \rangle\in Fr_1((\A^1_S-Z_1,\A^1_S-(Z_1\cup Z_2)),(\A^1_S-Z_1,\A^1_S-(Z_1\cup Z_2)))$ is equal to zero, 
and
$$\langle s \rangle\in Fr_1((\A^1_S,\A^1_S-Z_2),(\A^1_S-Z_1,\A^1-(Z_1\cup Z_2)))$$ 
is a right inverse up to a suspension to the canonical morphism in $i\colon (\A^1_S-Z_1,\A^1-(Z_1\cup Z_2)))\to (\A^1_S,\A^1_S-Z_2)$, where the homotopy between $\sigma id_{(\A^1_S-Z_1,\A^1_S-(Z_1\cup Z_2))} $ and $\langle s \rangle\circ i$ is given by 
$$\langle \alpha s\big|_{\PP^1_{\A^1_S-Z_1}} + (1-\alpha)\delta\big|_{\A^1_S-Z_1} s^\prime \rangle\in Fr_{1}((\A^1_S-Z_1,\A^1_S-(Z_1\cup Z_2))\times\A^1, (\A^1_S-Z_1,\A^1_S-(Z_1\cup Z_2)) ).$$ 
\end{proof}

%

\section{The isomorphisms  $\underline{\mathrm{K}}^\mathrm{MW}_n\to \underline\pi^{n,n}_s$ and $\underline{\mathrm{K}}^\mathrm{MW}_n\to \underline h^0(\mathbb ZF(\Delta\times -,\G_m^n))$, $\chark k\neq 2$.}

It is proven in \cite{M02} and \cite{M-A1Top} that there is a canonical isomorphism of sheaves 
\begin{equation}\label{eq:isok}\underline{\mathrm{K}}^\mathrm{MW}_n\to \underline\pi^{n,n}_s\end{equation} 
over a base filed $k$ for all $n\in \mathbb Z$.. 
Precisely the proofs are written for the case of a prefect field $k$ and as mentioned in the remark in \cite{M-A1Top} the result for the non-perfect filed follows by the general base change argument.

In the section we present the proof of the isomorphism based on the theory of framed motives \cite{GP_MFrAlgVar} and theory of Chow-Witt groups ( see the recent works  \cite{NFELD-MWCycMod}, \cite{CF} for the $\chark k=2$ case)

\newcommand{\D}{\mathbf{Sh}}
\newcommand{\Sh}{\mathrm{Sh}}

\subsection{The proof using the theory of framed motives and Chow-Witt groups}\label{subsection:FrCWKMWpi} 

The results of the Garkusha Panin theory of framed motives \cite{GP_MFrAlgVar} implies in particular that \cite{GP_MFrAlgVar} 
it is proven that $\underline{\pi}^{n,n}_s\to \underline h^0(\mathbb ZF(\Delta\times -,\G_m^n))$ for a perfect field $k$.
As shown in \cite{ElHoKhSoYa-MotDeloop} the ganeral base change argument like as above extends the result to the case of a base schemes $S$ that are essentially smooth (and even pro=smooth) over some perfect $k$.

Combining the methods of framed correspondences and homotopies with the 
theory of 
Chow-Witt groups \cite{BM_PullBack}, \cite{Fasel-ChWittRing} \cite{Fasel-GroupsdeCW}
it is proven in \cite{Nesh-FrKMW} that 
$\mathrm K^\mathrm{MW}(k)\simeq\pi^{n,n}_s(k) $ in the case $\chark k=0$
In \cite{DrKil-finfileds_AND_MorTh} the result is extended to the case of a perfect fields $\chark k\neq 2$.

Let us note that the Chow-Witt groups the are used to prove the injectivity of the map.

Here we improve the argument that the argument for the proof of the surjectivity of the map $\mathrm K^\mathrm{MW}(k)\simeq\pi^{n,n}_k$ for an arbitrary filed, actually it is done by the moving lemma proved in the next section.
Then we deduce the isomorphism \eqref{eq:isok} for an arbitrary base filed $k$.
Let us note that similar as above and to \cite{ElHoKhSoYa-MotDeloop} the arguments implies the result for an arbitrary pro-smooth base scheme.

\begin{lemma}
For an arbitrary filed $k$ the homomorphism ${\mathrm{K}}^\mathrm{MW}_n(k)\to \underline h^0(\mathbb ZF(\Delta\times -,\G_m^n))$ is surjective.
\end{lemma}
\begin{proof}
The claim follows similarly to \cite{Nesh-FrKMW} using proposition \ref{prop:MovingLemma} (moving lemma) proven in the next section, and separable field extension transfers for $\mathrm{K}^\mathrm{MW}_*$ form \cite[section 4,5]{M-A1Top} or \cite{CF}.
\end{proof}

\begin{lemma}
Assume one that one of the following conditions holds for a base scheme $S$
(a) $S = \Spec k$, $k$ is perfect, or
(b) the unramified Milnor-Witt K-theory $\unKMW_n$  over $S$ is strictly homotopy invariant for $n\geq 0$.

Then there is a homomorphism of sheaves $\underline h^0(\mathbb ZF(\Delta^\bullet,\G_m^{\wedge n})\to \underline{\mathrm{K}}^\mathrm{MW}_n$, for all $n\geq 0$, that the takes a correspondences $a\in Fr_1(pt,\G_m)$ defined by invertible $a\in k$ 
to the symbol $[a]$.
   
\end{lemma}
\begin{proof}[The proof for (a)]
The claim follows immediate form the universal property of the sheaf $\underline h^0(\mathbb ZF(\Delta^\bullet,\G_m^{\wedge n})$
since $\underline{\mathrm{K}}^\mathrm{MW}_n$ is a homotopy invariant stable linear presheaf with framed transfers.

Actually, is $\underline{\mathrm{K}}^\mathrm{MW}_n$ is the basic example of a presheaf with Milnor-Witt transfers, see \cite{FC-CWCorr}, 
in detail it is provided by the fact that $\underline{\mathrm{K}}^\mathrm{MW}_n$ is a zeroth homotopy groups of the complexes $C(X,G^n)$ \cite{Fasel-ChWittRing}, \cite{Fasel-GroupsdeCW}, the pushforwards for the homomologies of the complexes $C(X\times\PP^d,G^n,\mathcal O(1))_Z\to C(X,G^{n+d})_Z$, $Z\subset X\times\PP^n$ is closed finite over $X$, and the ring structure on the cohomologies of $C(X,G^{n+d})_Z$.
Hence $\underline{\mathrm{K}}^\mathrm{MW}_n$ is a stable framed presheaf because of the functor form the category of framed correspondences to the category of Chow-Witt correspondences constructed in \cite{DegFas} or \cite{DH}. 

Let us note that the homotopy invariance of $\underline{\mathrm{K}}^\mathrm{MW}_n$ follows form the isomorphism 
${\mathrm{K}}^\mathrm{MW}_n(\A^1_K)\simeq {\mathrm{K}}^\mathrm{MW}_n(K)$ due to the injectivity property for the framed stable linear homotopy invariant presheaves.
\end{proof}
\begin{proof}[The proof for (b)]
By the lemma \ref{cor:ReprKMW} the assumption implies that the $\G_m$-spectrum of Nisnevich sheaves $\mathcal{K}^\mathrm{MW}$
represents in $\mathbf D_{Nis,\A^1}(Sh_{Nis})[\G_m^{-1}]$ the sheaves $\unKMW_n$. Hence the sheaves $\unKMW_n$ are a homotopy invariant stable linear framed presheaves like as any $\SH$-representable presheaf. Now by the universal property of the sheaf $\underline{h}^0(\mathbb ZF(\Delta^\bullet\times -, \G_m^n)$ there is a homomorphism $\underline{h}^0(\mathbb ZF(\Delta^\bullet\times -, \G_m^n)\to \unKMW_n$ induced by the map $\G_m^n\to \unKMW_n\colon (a_1,\dots a_n)\to [a_1,\dots a_n]$.

It follows form the definitions that the composition $\mathrm{K}^\mathrm{MW}_n(k)\to \underline{h}^0(\mathbb ZF(\Delta^\bullet\times -, \G_m^n)\to \mathrm{K}^\mathrm{MW}_n(k)$ is identity for $n\neq 0$.
This proves the injectivity of the map $\mathrm{K}^\mathrm{MW}_n(k)\to \underline{h}^0(\mathbb ZF(\Delta^\bullet\times -, \G_m^n)$ for $n\geq 0$.

\end{proof}
\begin{lemma}\label{cor:ReprKMW}
(1) Assume that $\unKMW_n$ is strictly homotopy invariant for all integer $n\in \mathbb Z$ over some base $S$;
then the canonical homomorphisms $\unKMW_n\to \underline{\pi}^{n,n}_s(\mathcal K^\mathrm{MW})$ are isomorphisms for $n\in \mathbb Z$,
where $\mathcal K^\mathrm{MW}$ is
the spectrum of Nisnevich sheaves of abelian groups
$$\mathcal K^\mathrm{MW} = (\unKMW_0, \unKMW_1, \dots \unKMW_n\dots),\; \unKMW_{n}\times\G_m\to \unKMW_{n+1}\colon (\phi,a)\mapsto \phi \cdot a.$$

(2) Assume that $\unKMW_n$ is strictly homotopy invariant for all integer $n$ larger some $n_0$ over some base $S$ then $\unKMW_n$ is strictly homotopy invariant for all integer $n$;
\end{lemma}
\begin{proof}
(1) 
It follows form the strictly homotopy invariance of the sheaf $\unKMW_n$ that the fibrant replacement of $\unKMW_n$ with respect to the injective Nisnevich local model structure on the category of simplical specta of Nisnevich sheaves is $\A^1$-local.
So $[X, \unKMW_n]_{\SH_{S^1}(k)} = \unKMW_n$.
(Actually, in the case of a filed base case$\unKMW_n$ defines an element in hart of $\SH_{S^1}(k)$ with respect to the homotopy t-structure on $\SH_{S^1}(k)$ \cite[section 6.2]{M-Connect}.)

Then the claim follows form the isomorphisms  
\begin{equation}\label{eq:CancRKMW}
\unKMW_{n}(-)\simeq \unKMW_{n+1}(-\wedge\G_m)
\end{equation}
given by the canonical isomorphisms
$\unKMW_n(X\times\G_m)\simeq \unKMW_n(X)\oplus\unKMW_{n-1}(X)$.

(2) The claim follows immediate form \eqref{eq:CancRKMW}.
\end{proof}
\begin{remark}
Let us recall that the last isomorphism \eqref{eq:CancRKMW} follows form the homotopy invariance of $\unKMW_n$.
Consider the homomorphism $\unKMW_n(X\times\G_m)\simeq \unKMW_n(X)\oplus\unKMW_{n-1}(X)$ defined by the sub of the 
inverse image along the unit section $i_1\colon X\to X\times \G_m$, and the residue map at zero section
$\delta_0\colon \unKMW_n(X\times\G_m)\to \unKMW_{n-1}(X)$.
The inverse image $p^*$ along the projection $p\colon X\times\G_m\to X$ and $i_1^*$ induces the splitting $\unKMW_n(X\times\G_m)\simeq \unKMW_n(X)\oplus \Coker(p^*)$; 
on other side the morphism $\unKMW_{n-1}(X)\to \unKMW_n(X\times\G_m)\colon [a_1,\dots,a_{n-1}]\mapsto [t,a_1,\dots a_{n-1}]$ induces the left inverse to $\delta_0$, so we have the splitting 
$\unKMW_n(X\times\G_m)\simeq \Ker(\delta_0)\oplus \unKMW_{n-1}(X)$
Now the claim follows since
$Im(j^*) = \Ker(\delta_0)$, where $j\colon X\times\G_m\to X\times\A^1$, and
$Im(p^*)=Im(j^*)\simeq \unKMW_n(X)$.
\end{remark}
\begin{remark}
Let us note that the isomorphism $\unKMW_n(X)\simeq \unKMW_{n+1}(X\wedge \G_m)$ follows form the case of $X=pt$ and homotopy invariance of $\unKMW_n$ due to the injectivity for the framed linear stable homotopy invariant presheaves.
\end{remark}

\begin{theorem}
Assume one of the following (a) the base filed $k$ is perfect, (b) the base field $k$ is of characteristic different form 2. 
The homomorphisms of sheaves $\unKMW_n\to  \underline h^0(\mathbb ZF(\Delta\times -,\G_m^n))\to \underline{\pi}^{n,n}_s$ are an isomorphism and  $n\in \mathbb Z$. 
\end{theorem}
\begin{proof}
By the above lemmas we have the isomorphism $\mathrm{K}^\mathrm{MW}_n(k)\simeq \underline{h}^0(\mathbb ZF(\Delta^\bullet\times -, \G_m^n)$.
Then $\mathrm{K}^\mathrm{MW}_n(k)\simeq\underline{h}^0(\mathbb ZF(\Delta^\bullet\times -, \G_m^n)$ is an isomorphism for $n\geq 0$ due to the injectivity property for a homotopy invariant stable linear framed presheaves \cite{GP-HIVth}. 
 
Finally, the isomorphism 
$\unKMW_n\simeq \underline{h}^0(\mathbb ZF(\Delta^\bullet\times -, \G_m^n)$ for all $n$ 
follows from the 
the isomorphisms $\unKMW_n(X)\simeq \unKMW_{n+1}(X\wedge \G_m)$,
and the isomorphisms $\underline{h}^0(\mathbb ZF(\Delta^\bullet\times -, \G_m^n)\simeq \underline{\pi}^{n,n}_s$ and
$\underline{\pi}^{n,n}_s(\G_m)\simeq \underline{\pi}^{n-1,n-1}_s(pt)$.
\end{proof}


\subsection{The strictly homotopy invariance of $\unKMW_n$.}\label{subsect:SHIKMW}

In the subsection we summarise known arguments for the strictly homotopy invariance of $\unKMW_n$.

\subsubsection{Morel's pullback}
Firstly we recall the argument from \cite{M02} for the case of a field $k$, $\chark k\neq 2$.

\begin{lemma}
There are isomorphisms of sheaves $\unKMW_n\simeq {I}^n\times_{{I}^n/{I}^{n+1}} \underline{\mathrm{K}}^\mathrm{M}_n$ for all $n\in \mathbb Z$.
\end{lemma}
\begin{proof}
We refer to \cite{BM_PullBack}, \cite{Mor_PullBack} and \cite{GSZ-MWloc} for the case of the sections on fields of odd characteristic.
(Nevertheless the author haven't found a reference for the proof of the pullback of sheaves)
To get the claim for the sheaves firstly we need to note that all maps in the pullback square commutes with the residue morphisms, 
where by the residue morphism on Witt groups we mean the homomorphism $W(k(X))\to W(x)$ for $x\in X^(1)$ constructed by Schmid in \cite{Schmid-Wittringhom}. 
Namely this is provided by the formulas \cite[theorem 2.15]{M02} for the residues on Milnor-Witt K-theory, the similar formula of residue homomorphism on Milnor-K-theory \cite{Milnor70}, and for the residue map constructed by Schmid \cite[section 2.2, $\mathbf{D\tilde W}3$, formula bottom of page 21]{Schmid-Wittringhom}. 
Now it is enough to note that $W(U)\to W(k(U))$ is injective for a essential smooth $U$ and $W(U)\subset \Ker(W(k(U))\to \bigoplus_{x\in U^(1)}W(x))$.
\end{proof}
\begin{remark}
Let $Z\in X$ be a closed subscheme of codimension one in a local essentially smooth $k$-scheme $X$, $Z =Z(t)$, $t\in \mathcal O(X)$. 
Let $\delta\colon W(X-Z)\to W(Z)$ with respect to the equation $t$.
To check that the morphism $\mathrm{K}^\mathrm{MW}\to I^n$ is agreed wit the differentials it is enough to prove that $\delta(\langle t\rangle) = \langle 1\rangle$. In the case of a smooth scheme $Z$ it is given by the standard formula of the Gysin map. In an arbitrary case the claim follows form the regular one due to the rigidity along non-smooth closed embeddings for $W(-)$ proven by S.~Gille, since any such $X$ there is an embedding $X\subset X^\prime$ and $Z\subset Z^\prime$, $Z^\prime\to X^\prime$ is a closed smooth subscheme of codimension one.
\end{remark}

\begin{proposition}
Let the base be a filed $k$, $\chark k \neq 2$.
Then the sheaves $\unKMW_n$ are strictly homotopy invariant for $n\geq 0$.
\end{proposition}
\begin{proof}
Let us repeat some of the arguments form \cite[section 6, steps 1-4]{M02}.

The claim follows from the lemma above and from the strictly homotopy invariance of $underline{\mathrm{K}}^\mathrm{M}_*$, $I^n$ and $I^{n+1}/I^n$. The sheaves $\underline{\mathrm{K}}^\mathrm{M}_*$ and $\underline{\mathrm{K}}^\mathrm{M}_*/2$
are Rost's cyclomodules it follows form \cite[proposition 8.6, proposition 2.2(H)]{Rost-CycMod} and \cite[Theorem 6.1]{Rost-CycMod}
that $\underline{\mathrm{K}}^\mathrm{MW}_*$ and $\underline{\mathrm{K}}^\mathrm{MW}_*/2$ are strictly homotopy invariant.
So it follows form Milnor's conjecture $\underline{I}^n/\underline{I}^{n+1} = \underline{\mathrm{K}}^\mathrm{M}_n/2$, \cite{M_MilnorConj}, \cite{OVV-MilnorConj}, that the sheaves $\underline{I}^n/\underline{I}^{n+1} $ are strictly homotopy invariant. 
Since as proven in \cite{Horn} that the sheaves $W^n(-)$ are $\SH(k)$-representable, they are strickly homotopy invariant. 
Hence by induction we get strictly homotopy invariance of the sheaves $\underline{I}^n$.
%
\end{proof}

\subsubsection{Unramified sheaves and Milnor-Witt cyclomodules}

The next proof in the case of a perfect field of an arbitrary characteristic was given in \cite[Chapter 5]{M-A1Top}. The idea is to combine the theory of Rost cyclomodules and it's adaptation for Witt-groups to get the precise construction of one complex, a so called Rost-Smidt complex, defined over an arbitrary perfect filed
and  equal in the odd characteristic base field case to the fibred product of the Rost complex for $\mathrm{K}^\mathrm{MW}$ and the similar complex for Witt groups constructed by Schmid. 
In \cite{M-A1Top} the starting object which gives a rise to the complex is are unramified sheaves, and the main example is the Milnor-Witt K-theory. 

Recently, the idea was revisited and deeply studied in works \cite{NFELD-MWCycMod} and \cite{CF}. 
So called Milnor-Witt cyclomodules are defined, and the main example is the Milnor-Witt K-theory. 
It is proven in particular that the unramified sheaf corresponding to the Milnor-Witt cyclomodule is strictly homotopy invariant over a perfect field.

\subsubsection{Chow-Witt correspondences.}

Also one proof for the case of a perfect field is provided by the strictly homotopy invariance theorem for a homotopy invariant sheaves with Milnor-Witt transfers.
Actually, it follows form the definitions that the sheaves $\unKMW_n$ are 
sheaves with Chow-Witt correspondences, see \cite{FC-CWCorr}, so by \cite{DH} the homotopy invariance of $\unKMW_n$ implies the strictly homotopy invariance.

\section{Neshitov's moving lemma.}

%
%


\begin{definition}
Let $c= (Z,V;\phi,g)\in Fr_n(X, \G_m^n)$, $V\to \A^n_X$ is an etale neighbourhood of a closed subscheme $Z$ in $\A^n_X$, $Z$ is finite over $S$, $\phi\colon V\to \A^n$, $g\colon V\to \A^n$, $Z = V\times_{\phi,\A^n}0$.
We say that $c$ is \emph{simple} 
iff
$Z$ is smooth over $S$. 
\end{definition}

\begin{definition}
$\overline{Fr}_n(pt, \G_m^n)$ denotes the factor group of ${Fr}_n(pt, \G_m^n)$ up to $\A^1$-homotopy equivalence
\end{definition}

\begin{lemma}\label{lm:sectcor}
For any $c\in Fr_n(pt, \G_m^l)$ over an affine 
base scheme $S$ and for all large enough $d_i$, $i=1,\dots n$, and $r_j$, $j=1,\dots l$, 
there is 
a correspondence $c^\prime\in Fr_n(pt, \G_m^l)$ such that 
$[c^\prime]=[c]\in \overline{Fr}_n(pt, \G_m^l)$,
and such that
$$c^\prime = ( Z,\A^n-((Z(s)-Z)\cup Z(e)); s_1/t_\infty^{d_1}, \dots s_n/t_\infty^{d_n}; e_1/t_\infty^{r_1}, \dots e_l/t_\infty^{r_l}  )$$
for some 
sections $s_i\in \Gamma(\PP^n_S,\mathcal O(d_i)$, $d_i\in \mathbb Z$, $i=1,\dots n$, $e_j\in \Gamma(\PP^n_S,\mathcal O(r_j)$, $r_j\in \mathbb Z$, $j=1,\dots l$.
\end{lemma}
\begin{proof}

By Serre's theorem \cite[theorem 5.2]{Hartshorne-AlG} we can choose integers $d_i$ and sections $s=(s_i)$, $1\leq i \leq n$, $s_i\in \Gamma(\PP^n,\mathcal O(d_i))$, $s_i/t^{d_i}_\infty=\phi_i\big|_{Z(I(Z)^2)}$, $s_i\big|_{\PP^{n-1}}=t_\infty^{d_i}$, where $\P^{n-1}\subset \P^n$ is the subspace at infinity and $t_\infty\in \mathcal O(1)$, $Z(t_\infty)=\P^{n-1}$.
Similarly we can choose sections $e_i\in \Gamma(\PP^n,\mathcal O(l_i))$, $1\leq i\leq k$,
$e_i/t^{l_i}_\infty\big|_{Z(I(Z)^2)}=g_i\big|_{Z(I(Z)^2)}$,
where the $g_j$'s are the coordinates of the composition $\mathcal V\xrightarrow{g} Y \hookrightarrow\A^e$.
The functions $\lambda v^*(s_i/t_\infty^{d_i})+(1-\lambda)(\varphi_i)$ and $\lambda v^*(e_j/t_\infty^{r_j})+(1-\lambda)g_j$
gives a homotopy from $c$ to the framed correspondence 
$c^\prime=( Z,\A^n-(Z(s)-Z); s_1/t_\infty^{d_1}, \dots s_n/t_\infty^{d_n}; e_i/t_\infty^{l_i}, \dots (e_l/t_\infty^{r_l}  )$.
\end{proof}

\begin{notation}
Denote by $(F)_x$ the 
denote the fibre of the coherent sheaf $F$ on the scheme $X$ at a point $x\in X$, i.e. 
$(F)_x = i_x^*(F)$,  where $i_x\colon x\to X$ is the canonical embedding.

Denote $\hat N_{Y/X}$ the conormal sheaf of the closed subscheme $Y\subset X$. 
\end{notation}
\begin{lemma}\label{lm:XnsSubscheme}
Let $p\colon X\to Y$ be a finite morphism of schemes. 
Then there is a closed subscheme $X^{ns}\subset X$ such that 
$x\in X^{ns}$ iff
$x\in \Supp \Omega_p$, or
the residue filed at $x$ is not separable over the residue filed of the image of $x$ in $Y$;
%
Consequently if $p\colon X\to Y$ is flat then $x\in X^{ns}$ iff $p$ is not etale at $x$.
%
\end{lemma}
\begin{proof}
Consider the projection $p_1\colon X\times_{Y} X\to X$, which is finite morphism as well. 
Denote by $\Delta_X\subset \widetilde X\times_{Y} X$ the diagonal subscheme,
and fro any point $x\in X$ denote by $\delta_x\in \Delta$ the corresponding point under the canonical isomorphism $\Delta_X\simeq X$.
Then $$X^{ns} = p_1(\Supp \Omega_{p_1}\cap \Delta)\subset X.$$
Actually, let $x\in X$, denote $\widetilde X = X\times_Y x$, $x^2 = x\times_Y x$, and let $\delta_x\subset x\times x$ is the diagonal. Then 
the claim follows form 
the short exact sequence
$$0\to (\hat N_{x^2/\widetilde X})_x\to (\Omega_{p_1})_{\delta_x} \to \hat N_{\delta_x/x^2} ,$$
and isomorphisms
$$\hat N_{x^2/\widetilde X}\simeq p_2^*((\Omega_{p})_x),\, \hat N_{\delta_x/x^2}\simeq \Omega_{x^2\to x} ,$$
where $p_2\colon x^2\to x$ is the projection onto the second multiplicand.
\end{proof}

\begin{corollary}\label{cor:BigOpenetale}
Let $p\colon X\to Y$ be flat 
finite surjecive morphism, 
and $X$ is irreducible.
Assume that there is a point $x\in X$ such that the residue filed extension $\mathcal O(x)/\mathcal O(p(x))$ is separable, and $f$ is unramified at $x$.
Then there is a non-empty open subscheme $U\subset Y$ such that 
$X\times_Y U\to U$ is etale.
\end{corollary}
\begin{proof}
It follows form lemma \ref{lm:XnsSubscheme} and from assumption that there is a proper closed $X^{ns}\subset X$ such that $X-X^{ns}\to Y$ is etale. 
Since $X\to Y$ is finite surjective and $X$ is irreducible, then so is $Y$. Let $d=\dim Y$.
Since $X^{ns}\subset X$ is proper and since $X\to Y$ is finite, it follows that $\dim X^{nc}<d$.
Hence $Y-p(X^{ns})\neq \emptyset$.
Thus the claim is true for $U=Y-p(X^{ns})$.
\end{proof}

\newcommand{\der}{\operatorname{d}}

\begin{lemma}\label{lm:FinSur}
Let $S$ be a noetherian scheme of a finite type over $\mathbb Z$;
let $s_1,\dots s_n\in \Gamma(\PP^n_S,\mathcal O(d))$ be a set of sections, $s_i\big|_{\PP^{n-1}_S} = t_i^d$.
Then the vanishing locus $Z(s_1,\dots s_n)$ is finite surjective and flat over $S$.
\end{lemma}
\begin{proof}
Consider the morphism $f\colon \A^n\times\Gamma_d\to \A^n\times\Gamma_d$ 
defined by the regular functions  $\mathbf s_{i,d}/ t_\infty^d\in \mathcal O(\A^n\times\Gamma_d)$.
Since $\mathbf s_{i,d}\big|_{\PP^{n-1}_S} = t_i^d$, it follows that $f$ is quasi-finite.
In the same time the condition provides that the graph of $f$ is equal to the vanishing locus $Z(\mathbf s_1-\alpha_1 t_\infty^d, \dots \mathbf s_n-\alpha_n t_\infty^d)\subset \PP^n\times \A^n_S$ where $(\alpha_1,\dots \alpha_n)$ denotes coordinates on $\A^n$.
Hence $f$ is projective. Thus $f$ is finite, and since dimensions of the domain and the co-domain of $f$ are equal it follows that $f$ is finite.

Now let $x\in S$ be a point, $U\subset S$ is a affine Zariski neighbourhood of $x$. Since $U$ is affine there is a closed embedding 
$U\subset \Spec R$ be a regular ring $R$.
Consider a lift $\widetilde s_i\in \Gamma(\PP^n_{\Spec R}, \mathcal O(d))$ of the sections $s_i$, and the morphism 
$\widetilde f\colon \A^n_{\Spec R}\to \A^n_{\Spec R}$ defined by $\widetilde s_i/t^d_\infty$.
Then by the same reason as for $f$ the morphism $\widetilde f$ is finite and surjective.
Hence $\widetilde f$ is flat by \cite[Corollary 3.6]{AltKl}.
Thus $Z(\mathbf s_d)$ is flat over $\Gamma_d$.
\end{proof}
\begin{lemma}\label{lm:UniVLSmFlFsurCon}
Let $S$ be a scheme,
and denote by
$\mathcal O(1)$ the ample bundle on $\PP^n_S$ over $S$ 
and denote by $t_1,\dots t_n, t_\infty$ the coordinate section of $\mathcal O(1)$, in particular $Z(t_\infty) = \PP^{n-1}$ is the infinite hypersurface.
Assume that $Z\subset \A^n_X$ is a closed subscheme finite over $S$, 
$e\in \mathbb Z$, and 
$\beta_i\in \Gamma(Z(I^2(Z)),\mathcal O(e))$, $i=1,\dots n$ are sections such that $Z(\beta_1,\dots\beta_n) = Z$.

Denote by $\Gamma_d$ the affine space over $S$ that $S$-points is the set
$$\Gamma_d(S)=\{(s_1,\dots s_n)\in \Gamma(\PP^n_S, \mathcal O(d)^n)| 
s_i\big|_{Z(I^2(Z))} = \beta_i t_\infty^{d-e},
s_i\big|_{\PP^{n-1}_S} = t_i^d,
\},$$ 
Let $\mathbf s_d = (\mathbf s_{1,d},\dots \mathbf s_{n,d})\in \Gamma(X\times \Gamma_d, \bigoplus_i \mathcal L_i\otimes\mathcal O(d))$ be the universal section. 
Denote by $\mathbf Z_d$ the closed subscheme $\mathbf Z_d=Z(\mathbf s_d)- (Z\times\Gamma_d)\subset X\times\Gamma_d$.

Then 
there exist $N$ such that $\forall d>N$ the vanishing locus $\mathbf Z_d$ is connected and smooth over $S$,
and $\mathbf Z_d$ is flat finite surjective over $\Gamma_d$.
\end{lemma}
\begin{proof}

It follows form the relative version of the Serre's theorem \cite[theorem 8.8]{Hartshorne-AlG} that 
there is $N$ such that $\forall d>N$ the homomorphisms 
$\Gamma(X,\mathcal L_i\otimes\mathcal O(d) )\to \Gamma(Z(I^2(Z))\amalg x_1\amalg x_2,\mathcal L_i\otimes\mathcal O(d) )$, 
are surjective for all $i=1,\dots n$, $x_1,x_2\in X$ is a pair of different closed points.

Then for all $d>N$ the universal vanishing locus $Z(\mathbf s_d)$ is smooth over $S$.
Actually, let $s=(s_1,\dots s_n)\in \Gamma_d$ be an $S$-point, and let $x\in Z(s_1,\dots s_n)\subset X$.
By assumption there is a section $s^\prime \in \Gamma(Z(I^2(Z)),\mathcal O(d) )$ such that 
$s^\prime\big|_{Z(I^2(Z))\amalg \PP^{n-1}_S}=0$, and $s^\prime\big|_{x}$ is invertible.
Denote by \begin{equation}\label{eq:esubscpv}v_i = (0,\dots,0,s^\prime,0,\dots 0),\end{equation} where $s^\prime$ is located at the $i$-th slot, 
the vectors in the tangent space $T_{\Gamma_d,s}$.  
Now on the one side we have
\begin{equation}\label{eq:SmGraphVl}
Z(\mathbf s_d)\times_{\PP^n_S} (\PP^n_S-Z(s^\prime)) = Z(\mathbf s_{1,d}/s^\prime_{1}, \dots \mathbf s_{n,d}/s^\prime_{n})\times_{\PP^n_S} (\PP^n_S-Z(s^\prime)).
\end{equation}
On the other side
we see that differentials of the functions $\mathbf s_{1,d}/s^\prime_{1}$, $i=1,\dots n$, at the point 
$(x,s)\in X\times\Gamma_d$ in the directions defined by vectors $v_j$, $j=1\dots n$, are linearly independent,
namely
$$\der_{v_i} (\mathbf s_{i,d}/s^\prime_{i}) = 1, \der_{v_i} (\mathbf s_{i,d}/s^\prime_{i}) = 0, i\neq j.$$
Thus the conormal cone of $Z(\mathbf s_d)$ in $X\times \Gamma_d$ is a vector bundle of the dimension 
$n$.
So $Z(\mathbf s_d)$ is smooth over $S$, since $X$ and $\Gamma_d$ are smooth.

Now we need to show that $\mathbb Z_d$ is connected.
By lemma \ref{lm:FinSur} $\mathbb Z_d$ is flat finite and surjective over $\Gamma_d$. So it is enough 
to show that for any  $s\in \Gamma_d(S)$ and $x_1,x_2\in Z(s)$
there is a subspace $E\subset \Gamma_d$ such that 
$x_1$ and $x_2$ are in the same connected component of $\mathbb Z_d\times E$.
Consider the section 
$s^\prime\in \Gamma(\PP^n_X,\mathcal O(d))$, $s^\prime\big|_{PP^{n-1}_S\amalg Z(I^2(Z))}=0$, $s^\prime\big|_{x_1\amalg x_2}$ is invertible.
Define $E$ as the subspace of $\Gamma_d$ spanned by the point $s$ and tangent vectors $v_i$ \eqref{eq:esubscpv}.
Then we see from \eqref{eq:SmGraphVl} that $Z(\mathbf s_d)\times_{\PP^n_S} (\PP^n_S-Z(s^\prime))$ is equal to the graph of the map $(\PP^n_S-Z(s^\prime)\to \A^n_S$ given by regular functions $s_i / s^\prime$.
So it is connected. 
And by assumption on $s^\prime$, we have $x_1,x_2\in \PP^n_S - Z(s^\prime)$. Thus the claim follows.
%
%
\end{proof}

\begin{corollary}\label{cor:etalevanishloc}
Let $Z\in \PP^n_S$ be a closed subscheme in the projective space over a semi-local base scheme $S$ with infinite residue fields.
Let $\beta_i\in \Gamma(Z(I^2(Z)), \mathcal O(e))$, $i=1\dots n$, be a set of sections for some $e\in \mathbb Z$.
Then for all large enough $d$ there is a vector of sections $(s_1,\dots s_n)$, $s_i\in \Gamma(\PP^n_S,\mathcal O(d))$ such that
$s_i\big|_{Z(I^2(Z))}=\beta_i t_\infty^{d-e}$, $s_i\big|_{\PP^{n-1}_S}=t_i^d$ and such that
$Z(s_1,\dots s_n) - Z$ is etale over $S$. 
\end{corollary}
\begin{proof}
Consider the universal section $\mathbf s_d$ on $\PP^n\times\Gamma_d$ as in lemma \ref{lm:UniVLSmFlFsurCon}.
By Serre's theorem \cite[theorem 5.2]{Hartshorne-AlG} for all large enough $d$ there is a vector $s=(s_1,\dots s_n)\in \Gamma_d(S)$ such that
$s_i\big|_{Z(I^2(0_S))} = t_i t_\infty^{d-1}$ where $0_S\subset \PP^n_S$ denotes the zero point-section.
Then the morphism $Z(s_1,\dots s_n)-Z\to S$ is etale on $0_S$. 
Hence by corollary \ref{cor:BigOpenetale} there is a non-empty open subscheme $U\subset \Gamma_d$ such that
$\mathbf Z_d\times_{\Gamma_d} U\to U$ is etale.
Now since $S$ is semi-local with infinite residue fields,
there is an $S$-point $s\colon S\to U$. 
So $s$ is a vector of sections $(s_1,\dots s_n)$ such that $Z(s_1,\dots s_n) = \hat Z\amalg Z^s$ and $Z^s$ is etale over $S$.
\end{proof}

\begin{proposition}\label{prop:MovingLemma}
For any $c\in Fr_n(pt, \G_m^n)$ over a semi-local 
base scheme $S$ there are 
simple correspondences $c^+,c^-\in Fr_n(pt, \G_m^n)$ such that $[c^+]-[c^-]=[c]\in \overline{\mathbb ZFr}_n(pt, \G_m^n)$. 
\end{proposition}
\begin{proof}


We can assume that the residue fields of $S$ is infinite due to the finite descent for framed correspondences, see Appendix A, lemma  \ref{lm:LFrS}.
In details, assume the result for local schemes with infinite residue fields; then for an arbitrary $S$ we can consider extensions $S_{1,l}\to S$ and $S_{2,l}\to S$ defined by equations $x^{q_1^l}-1$ and $x^{q_2^l}-1$ on $S$, where $q_1q_2$ are prime integers coprime to characteristics of $S$, and $n\in mathbb N$. Let $S_1 = \varprojlim_l S_{1,l}$, $S_2 = \varprojlim_l S_{2,l}$. Then $S_1$ and $S_2$ are semi-local schemes with infinite residue fields, so by assumption the there are simple correspondences $c_i^+, c_i^-\in Fr(S_i,\G_m^n)$, $[p_i^*(c)]=[c_i^+]-[c_i^-]$, $p_i\colon S_i\to S$, $i=1,2$. Correspondences $c_i^*$ are defined by finite set of data over $\mathcal O(S_i)$ and hence $c_i^*$ are defined over $S_{i,l}$ for some $l\in \mathbb Z$. By assumption on $q_1$ and $q_2$ the schemes $S_{1,l}$ and $S_{2,l}$ are etale over $S$. Hence the correspondences $c^* = (c_1^*\amalg c_2^*)\circ L$ given by the finite descent, where $L\in Fr(S, S_{1,l}\amalg S_{2,l})$ is defined in lemma \ref{lm:LFrS}, are simple, and by lemma \ref{lm:LFrS} we have 
$[c] = [c^+]-[c^-]$.

By lemma \ref{lm:sectcor} we can assume
$$c = ( Z,\A^n-((Z(s)-Z)\cup Z(e)); s_1/t_\infty^{p}, \dots s_n/t_\infty^{d}; e_1/t_\infty^{q}, \dots e_l/t_\infty^{q}  )$$
where $s_i\in \Gamma(\PP^n_S,\mathcal O(p))$, $p\in \mathbb Z$, $i=1,\dots n$, 
$e_j\in \Gamma(\PP^n_S,\mathcal O(q))$, $q\in \mathbb Z$, $j=1,\dots l$, and $Z(e) =Z(e)$.
Denote $\hat Z= Z(s) - Z$, then $Z(s) = Z\amalg \hat Z$.

By corollary \ref{cor:etalevanishloc} we see that there is $N$ such that for all $d>N$ 
there are vectors of sections 
$$s^+=(s^+_1,\dots s^+_n)\in \Gamma(\PP^n_S,\mathcal O(d)^n),\;s^-=(s^-_1,\dots s^-_n)\in \Gamma(\PP^n_S,\mathcal O(d)^n)$$
such that 
$$\begin{array}{lcllcl}
s^+_i\big|_{Z(I^2(\hat Z))} &=& s_i\big|_{Z(I^2(\hat Z))} t_\infty^{d-p}, &
s^+_i\big|_{\PP^{n-1}_S} &=& t_i^{d},\\ 
s^-_i\big|_{Z(I^2(Z(s)))} &=& s_i\big|_{Z(I^2(Z(s)))} t_\infty^{d-p}, &
s^-_i\big|_{\PP^{n-1}_S} &=& t_i^{d}
\end{array}$$
and such that 
$Z(s^+)-\hat Z$ and $Z(s^-) - Z(s)$ are etale over $S$.

In the same times by Serre's theorem for all large enough $d$
there are sections $r_i\in \Gamma(\PP^n_S , \mathcal O(d-p) )$, 
$r_i\big|_{Z(I^2(Z(s))}=t_\infty^d$, 
$r_i\big|_{\PP^{n-1}_S}=t_i^d$.

Denote $s^\prime = (s^\prime_1,\dots s^\prime_n)$, $s^\prime_i = s_i r_i$.
Then the affine homotopy of framed correspondences given by 
$\lambda s_i r_i + (1-\lambda) s_i t_\infty^{d-p}$ implies that
$$c\stackrel{\A^1}{\sim} c^\prime=(Z, \A^n-( (Z(s^\prime)-Z)\cup Z(e), s_1r_1/t_\infty^{d},\dots s_n r_n/t_\infty^{d}; e_1/t_\infty^q, \dots e_l/t_\infty^q ).$$
On other side 
\begin{gather*}
c^\prime = c^{\prime,+} - c^{\prime,-}\in \mathbb ZFr_n(pt_S,\G_m^l),\\
\begin{array}{lcllll}
c^{\prime,+}=
(Z(s^\prime) -\hat Z, 
\A^n-( (Z(s^\prime)-\hat Z)\cup Z(e)); &
s^\prime_1/t_\infty^{d},\dots s^\prime_n/t_\infty^{d}; &e_1/t_\infty^q, \dots e_l/t_\infty^q ),\\
c^{\prime,-}=
(Z(s^\prime) - Z(s) , 
\A^n-( (Z(s^\prime)-Z(s) )\cup Z(e)); & 
s^\prime_1/t_\infty^{d},\dots s^\prime_n/t_\infty^{d}; & e_1/t_\infty^q, \dots e_l/t_\infty^q ).
\end{array}\end{gather*}
So the claim follows since by the above we have
$$\begin{array}{lcllll}
c^{\prime,+}&\stackrel{\A^1}{\sim}&
(Z(s^+)-\hat Z, &
\A^n-( (Z(s^+)-\hat Z)\cup Z(e));&
s^+_1/t_\infty^{d},\dots s^+_n/t_\infty^{d}; &
e_1/t_\infty^q, \dots e_l/t_\infty^q )
\\
c^{\prime,-}&\stackrel{\A^1}{\sim}&
(Z(s^-)-Z(s), &
\A^n-( (Z(s^-)-Z(s))\cup Z(e)); &
s^-_1/t_\infty^{d},\dots s^-_n/t_\infty^{d}; &
e_1/t_\infty^q, \dots e_l/t_\infty^q ).
\end{array}$$
\end{proof}

\section{Appendix A: the finite descent over a base.}

In this section we recall the $\A^1$-homotopy finite descent for framed correspondences and presheaves with framed transfers presented originally simultaneously and independently in \cite{DrKil-finfileds_AND_MorTh}\footnote{The finite descent for framed correspondences is written for the case of fields and representable presheaves but the finite descent we use here is given by the same formulas.} (see also \cite[Appendix]{FD-MotPairs}), and \cite[Appendix B]{ElHoKhSoYa-MotDeloop}. 
We refer to \cite{Voevodsky-FrCor} and \cite{GP_MFrAlgVar}
the theory of framed correspondences and framed motives,
see \cite[definition 2.1, definition 8.4]{GP_MFrAlgVar} for the definition of framed correspondences. 
We will use the functor form the category $\mathbb ZFr(S)\to \SHd(S)$ induced by the composition map $Fr_n(X,Y)\to Sh_{nis}(X\times\PP^n/ X\times \PP^{n-1}, Y\times \A^n/ Y\times(\A^n-0) )\to [X,Y]_{\SHd(S)}$.

\begin{proposition}\label{prop:FinDesc}
Let $S_1 = Z(f_1)\subset \A^1_S$ and $S_2 = Z(f_2)\subset \A^1_S$, where $f_1,f_2$ are polynomials of coprime degress with coefficients in the ring of regular functions on a scheme $S$ and leading coefficients being equal to 1.
Suppose that $S_1\to S$ and $S_2\to S$ are etale. 
Then the homomorphism $e\colon [X,Y]_{\SHd(S)} \to [X\times_S \widetilde S,Y\times_S\widetilde S]_{\SHd(S)}$ is injective for any dotted smooth schemes $X$ and $Y$ over $S$. 
\end{proposition}
\begin{proof}
To get the claim it is enough to construct the left inverse $e$.

Let $L$ be the morphism in $[S, \widetilde S]_{\SHd(S)}$ given by the sum of framed correspondence 
$(A^1_{S_1}-(S_1\times_S S_1 - \Delta_{S_1/S}) ,f,pr_{S_1}) \in Fr_1(S,S_1)$,
via the functor $\mathbb ZFr_*(S)\to \SHd(S)$,
where $\Delta_{S_1/S}\to S_1\times_S S_1$ is the diagonal, $pr_{S_1}\colon \A^1_{S_1}\to S_1$.
Let $p\colon \widetilde S\to S$ and $p_Y\colon Y\times\widetilde S\to Y$ be the canonical projections.
Let's denote $L_X=id_X=\boxtimes L= pr_{\#} pr^*(L)$, where $p\colon X\times \to S$ is the structural morphism.

Then the explicit framed correspondence $(\A^1_S\times \A^1 - (S_1\times_S S_1 - \Delta_{S_1/S})\times 0, Z(h), h, pr_{S_1\times\A^1} )$, where $h = (1-\lambda) f_{i}+\lambda x^{\deg f_i}$, $i=1,2$, gives 
the homotopy between $p \circ L.$ and the morphism defined by framed correspondence $\Lambda_{\deg f_i}$ defined by the framed correspondence $(\A^1_S, Z(x^{\deg f_i}, x^{\deg f_i}, pr_S)$, where $pr\colon \A^1_S\to S$ is canonical projection.
In the same time the homotopy given by $(\A^1_S, Z((1-\lambda) x^l + \lambda (x^l)(x-1), pr_S)$ gives the homotopy between $\Lambda_l$ and $\Lambda_{l-1}+\langle (-1)^{l-1}\rangle$, where $\langle a \rangle$ denotes the element in $[pt,pt]_{\SHd(S)}$ defined by the multiplication $\G^1\to \G^1\colon x\mapsto ax$ for any invertible regular function $a$ on $S$. 

Then the left inverse is given by 
$$\begin{array}{ccc}
\,[X\times_S \widetilde S,Y\times_S\widetilde S]_{\SHd(S)} &\to& [X,Y]_{\SHd(S)}\\
a &\mapsto & p_Y\circ a\circ L_X
\end{array}$$
Actually let $\widetilde a = p^*(a)$ be the base change of $a$ along the morphism $p\colon \widetilde S\to S$ for $a\in [X,Y]_{\SHd(S)}$, then
we have $$ a=a \circ p_X\circ L_X=p_Y\widetilde a\circ L_X,$$
since $p \circ L$ is $\A^1$ homotopy equivalent to the identity morphism $id_S$.
\end{proof}
 
We see form the above proof the following
\begin{lemma}\label{lm:LFrS}
For any base scheme $S$ and etale coverings $S_1\to S$, $S_2\to S$ defined by two separable polynomials over $S$ with unit leading terms and of a coprime degrees, 
there is framed correspondences $L\colon Fr_1(S,S_1\amalg S_2)$ such that $[p\circ L] = [id_S]\in \overline{\mathbb ZF}(S,S)$, where $\overline{\mathbb ZF}(S,S)$ is a factor sheaf of ${\mathbb ZF}(S,S)$ with respect to $\A^1$-homotopies.
\end{lemma}

\section{Appednix B: The sign for the compositions in $[\G_m^{\wedge l},\G_m^{\wedge n}]$.}
\begin{proposition} \label{prop:composa}
Let 
$f\in [\G_m^{\wedge n}, \G_m^{\wedge m^\prime}]_\SH$
$g\in [\G_m^{\wedge m}, \G_m^{\wedge n^\prime}]_\SH$, then
\begin{itemize}
\item[(1)]
$\Sigma_{\G_m}^{n^\prime} f \circ \Sigma_{\G_m}^{n} g \sim^\G  \langle -1\rangle^s \bullet(\Sigma_{\G_m}^{n^\prime} f \bullet \Sigma_{\G_m}^{n} g)$, where
$s=(m^\prime+n^\prime)(m^\prime+n)$;
\item[(2)]
$\Sigma_{\G_m}^{n^\prime} f \circ \Sigma_{\G_m}^{n} g \sim^{\G}
\langle -1\rangle^{s}\bullet
(\Sigma_{\G_m}^{m^\prime} g \circ \Sigma_{\G_m}^{m} f),$
where $s = (n^\prime+m)(m^\prime+n)+n^\prime+m$
\end{itemize}
\end{proposition}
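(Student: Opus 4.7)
My plan is to reduce both equalities to the interchange law in the symmetric monoidal category $\SH(S)$, together with the naturality of the braiding, and to track all sign contributions via Lemma~\ref{lm:Twist}.

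First I will record the tautological identity
\[
f\bullet g \;=\; (f\bullet id_{n^\prime})\circ (id_n\bullet g) \;=\; f\Sigma^{n^\prime}_\G\circ \Sigma^n_\G g
\]
coming from interchange $(a\bullet b)\circ(c\bullet d)=(a\circ c)\bullet(b\circ d)$ with two of the four arguments being identities. This already exhibits $f\bullet g$ as a composition of two \emph{one-sided} suspensions, so part~(1) is reduced to comparing $\Sigma^{n^\prime}_\G f = id_{n^\prime}\bullet f$ with $f\Sigma^{n^\prime}_\G = f\bullet id_{n^\prime}$ and propagating the resulting twist through the composition.

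The comparison is supplied by naturality of the symmetry: for any $f\colon \G_m^{\wedge n}\to \G_m^{\wedge m^\prime}$ the square
\[
T_{m^\prime,n^\prime}\circ (f\bullet id_{n^\prime})\;=\;(id_{n^\prime}\bullet f)\circ T_{n,n^\prime}
\]
commutes, where $T_{a,b}$ denotes the block-swap braiding on $\G_m^{\wedge(a+b)}$. Equivalently, $\Sigma^{n^\prime}_\G f = T_{m^\prime,n^\prime}\circ f\Sigma^{n^\prime}_\G\circ T_{n^\prime,n}$. Substituting into the composition $\Sigma^{n^\prime}_\G f\circ \Sigma^n_\G g$ and invoking Lemma~\ref{lm:Twist}, which in $\SH(S)$ identifies each $T_{a,b}$ with $\langle -1\rangle^{ab}$ up to $\G_m$-suspension, turns the composition into a power of $\langle -1\rangle$ times $f\bullet g$. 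A final comparison $f\bullet g \sim^\G \Sigma^{n^\prime}_\G f\bullet \Sigma^n_\G g$ (again by moving identity factors past one another in the external product, with one more block-swap) then yields~(1) with the stated exponent $(m^\prime+n^\prime)(m^\prime+n)$ modulo~$2$.

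For part~(2) I will apply~(1) twice: to $\Sigma^{n^\prime}_\G f\circ \Sigma^n_\G g$, identifying it with $f\bullet g$ up to a sign $\langle -1\rangle^{s_1}$, and to $\Sigma^{m^\prime}_\G g\circ \Sigma^m_\G f$, identifying it with $g\bullet f$ up to $\langle -1\rangle^{s_2}$. The product $g\bullet f$ in turn equals $f\bullet g$ up to a third sign $\langle -1\rangle^{s_3}$, coming from swapping the two blocks on source and target via naturality of the braiding together with Lemma~\ref{lm:Twist}. Summing $s_1+s_2+s_3$ modulo~$2$ produces the exponent $(n^\prime+m)(m^\prime+n)+n^\prime+m$ stated in~(2).

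The main obstacle is not conceptual but combinatorial: every naturality square and every rearrangement of the external product contributes an exponent $ab\pmod 2$ from some block-swap $T_{a,b}$, and one has to verify that the resulting sums reduce to the exponents announced in the statement. The genuine $\A^1$-homotopical input is entirely packaged into Lemma~\ref{lm:Twist}; everything else is formal bookkeeping inside the symmetric monoidal category $\SH(S)$.
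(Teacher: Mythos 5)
Your argument is essentially the paper's own: the proof of (1) there inserts exactly the block-swap permutations $\check P,\hat P$ that you produce from naturality of the braiding, evaluates them by Lemma \ref{lm:Twist}, and rests on the same interchange identity $f\Sigma^{n^\prime}_\G\circ\Sigma^{n}_\G g=f\bullet g$; the proof of (2) likewise conjugates the external product by two block swaps $\check G,\hat G$ and reduces to (1), which is precisely your $s_1+s_2+s_3$ accounting. The one thing you defer --- the explicit mod-$2$ arithmetic of the exponents, including the sign cost of trading the left suspension $\Sigma^{n^\prime}_\G f$ for the right suspension $f\Sigma^{n^\prime}_\G$ and of comparing $f\bullet g$ with $\Sigma^{n^\prime}_\G f\bullet\Sigma^{n}_\G g$ --- is the entire quantitative content of the proposition and the only part the paper's proof actually writes out, so you should carry that computation through rather than assert that it matches the stated $s$.
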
\begin{proof}[Proof of the proposition] \item[(1)] The equality is provided by the permutation on the middle term of the composition and lm \ref{lm:Twist}
\begin{multline*}
\Sigma^{1+n+n^\prime}_\G(\Sigma_{\G}^{n^\prime} f \circ \Sigma_{\G}^{n} g) = 
(\Sigma_{\G}^{1+(n+n^\prime)+n^\prime} f)  \circ (\Sigma_{\G}^{1+(n+n^\prime)+n} g)=\\
(\Sigma_{\G_m}^{1+n^\prime} f \Sigma_{\G_m}^{n+n^\prime}) \circ (\check P \circ \Sigma_{\G_m}^{1+(n+n^\prime)+n} g  \circ \hat P)
\stackrel{lm \ref{lm:Twist}}{=}\\
\Sigma^{m^\prime +2n^\prime+n}_\G\langle -1\rangle^{s} \circ (\Sigma_{\G_m}^{1+n^\prime} f \Sigma_{\G_m}^{n+n^\prime}) \circ (\Sigma_{\G_m}^{1+(n+n^\prime)+n} g)=\\
\langle -1\rangle^{s} \bullet ((\Sigma_{\G_m}^{1+n^\prime} f \Sigma_{\G_m}^{n+n^\prime}) \circ (\Sigma_{\G_m}^{1+(n+n^\prime)+n} g))=
\langle -1\rangle^{s} \bullet [\Sigma_{\G_m}^{n^\prime} f \bullet \Sigma_{\G_m}^{n} g]
\end{multline*} 
where 
$\hat P\colon \G_m^{m+n}\wedge\G_m^{n+n^\prime+1}\to \G_m^{n+n^\prime+1}\wedge\G_m^{m+n}$,
$\check P\colon \G_m^{n^\prime+n}\wedge\G_m^{n+n^\prime+1}\to \G_m^{n+n^\prime+1}\wedge\G_m^{n^\prime+n}$  
are the permutations which replace the multiplicands, $\sign \check P = (n^\prime+n)(n+n^\prime +1)=0$, $\sign \hat P= (n+n^\prime+1)(n+m)=s$.

\item[(2)]
Since 
$$\check G \circ (\Sigma_{\G_m}^{n^\prime} f \bullet \Sigma_{\G_m}^{n} g)\circ \hat G=
\Sigma_{\G_m}^{m^\prime} g  \bullet \Sigma_{\G_m}^{m} f,$$
where 
$$\begin{array}{cccc}
\check G\colon &
\G_m^{\wedge n}\wedge\G_m^{\wedge n^\prime}\wedge\G_m^{\wedge m}\wedge\G_m^{\wedge n}&\to& 
\G_m^{\wedge m}\wedge\G_m^{\wedge n}\wedge\G_m^{\wedge n}\wedge\G_m^{\wedge n^\prime},\\ 
\hat G\colon &
\G_m^{\wedge m^\prime}\wedge\G_m^{\wedge n^\prime}\wedge\G_m^{\wedge n^\prime}\wedge\G_m^{\wedge n}&\to& 
\G_m^{\wedge n^\prime}\wedge\G_m^{\wedge n}\wedge\G_m^{\wedge m^\prime}\wedge\G_m^{\wedge n^\prime},
\end{array}$$
and 
$\sign 
(\check G) \sign (\hat G) = (n+n^\prime)(m+m^\prime+1)$ 
the claim follows from point (1) and lm \ref{lm:Twist}.
\end{proof}

\begin{corollary}\label{cor:composa} Let 
$f\in [\G_m^{\wedge n}, \G_m^{\wedge m^\prime}]_\SH$
$g\in [\G_m^{\wedge m}, \G_m^{\wedge n^\prime}]_\SH$, then
\item[(0)]
$f\bullet g\sim^\G (\Sigma^1_\G f)\bullet g \sim^\G \langle -1\rangle^{m+n^\prime} (\Sigma^1_\G f)\bullet g$  
\item[(1)]
$ \Sigma^{1+n}(\Sigma_{\G_m}^{k+n^\prime} f \circ \Sigma_{\G_m}^{k+n} g) =  \langle -1\rangle^{k(n+n^\prime)}(\Sigma_{\G_m}^{k+n^\prime} f \bullet \Sigma_{\G_m}^{k+n} g) $ for any $k\in \mathbb Z$ such that all terms in the formula are defined.
\end{corollary}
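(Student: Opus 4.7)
The plan is to deduce both parts of Corollary \ref{cor:composa} from Proposition \ref{prop:composa} together with Lemma \ref{lm:Twist}, which identifies the class of a permutation of $\G_m$-factors with a power of $\langle -1\rangle$ up to $\G_m$-stabilization. Since the entire corollary lives modulo the relation $\sim^\G$, I would work freely up to additional stabilizations, trading them at each step for sign contributions via Lemma \ref{lm:Twist}.

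For part (0), the first equivalence $f\bullet g \sim^\G (\Sigma^1_\G f)\bullet g$ is tautological: the two sides are literally related by a $\Sigma^1_\G$, so they are equivalent by the very definition of $\sim^\G$. For the second equivalence, I would interpret the extra $\G_m$-factor introduced by $\Sigma^1_\G$ as a floating $\id_{\G_m}$ that can be shuffled past the factors of $g$. Each elementary transposition of a $\G_m$ past another $\G_m$ is given by the twist on $\G_m^{\wedge 2}$ which, by Lemma \ref{lm:Twist}, equals $\Sigma^2_\G \langle -1\rangle$. Permuting the floating factor past the source $\G_m^{\wedge m}$ of $g$ and the target $\G_m^{\wedge n'}$ of $g$ then collects a total sign of $\langle -1\rangle^{m+n'}$, yielding the required equivalence.

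For part (1), the key is to apply Proposition \ref{prop:composa}(1) after stripping off the common $k$-fold suspension. Setting $F = \Sigma^k_\G f$ and $G = \Sigma^k_\G g$, the left-hand side $\Sigma^{1+n}_\G(\Sigma^{k+n'}_\G f \circ \Sigma^{k+n}_\G g)$ becomes a single instance of the proposition's composition-to-external-product formula, producing the external product $\Sigma^{k+n'}_\G f \bullet \Sigma^{k+n}_\G g$ up to a sign. The sign produced by the proposition combines, via Lemma \ref{lm:Twist}, with the parity of the permutation needed to move the $k$ additional $\G_m$-factors past the remaining $n+n'$ factors on either side; the former are universal (independent of $k$) and the latter contribute exactly $\langle -1\rangle^{k(n+n')}$.

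The hard part is pure sign bookkeeping modulo $2$. One must keep strict track of which $\G_m$-factors are permuted past which, ensure that the stabilization exponents on both sides of each equality match before invoking Lemma \ref{lm:Twist}, and verify that the composite sign produced by Proposition \ref{prop:composa}(1) together with the extra stabilization twists reduces precisely to $\langle -1\rangle^{m+n'}$ in part (0) and to $\langle -1\rangle^{k(n+n')}$ in part (1). I expect the main subtlety to be organizing these permutations in a canonical order so that the sign identities reduce to a single mod-$2$ arithmetic check, in the same style as the identity $l(l+k+1)\equiv lk \pmod{2}$ used in the proof of Proposition \ref{prop:composashort}.
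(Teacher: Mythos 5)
Your argument follows the paper's own route: part (0) is exactly the paper's appeal to Lemma \ref{lm:Twist} (shuffling the floating $\G_m$-factor past the source and target of $g$ to collect the sign $\langle -1\rangle^{m+n'}$), and part (1) is obtained by combining Proposition \ref{prop:composa} with part (0), the only cosmetic difference being that you invoke point (1) of the proposition where the paper cites point (2). The mod-$2$ sign bookkeeping you defer at the end is likewise left implicit in the paper's one-line proof, so the proposal matches the intended argument.
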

\begin{proof}
(0) The claim follows form lemma \ref{lm:Twist}; (1) The claim follow form point (0) and prop \ref{prop:composa}.(2);
\end{proof}

\Addresses


\begin{thebibliography}{XXXXX}


\bibitem{AltKl}Altman~A., Kleiman~S., Introduction to Grothendieck Duality Theory, Lecture Notes in Mathematics
146, Springer, 1970.
\bibitem{AGP-CancTh}
A. Ananyevskiy, G. Garkusha, I. Panin,
Cancellation theorem for framed motives of algebraic varieties,
arXiv:1601.06642.
\bibitem{BM_PullBack}
J. Barge et F. Morel, Cohomologie des groupes lin ́eaires, K-th ́eorie de Milnor et groupes de Witt, C.R. Acad. Sci. Paris, t. 328, S ́erie I, p. 191-196, 1999.
\bibitem{BGPW_EqChWittGerst}
Paul Balmer, Stefan Gille, Ivan Panin and Charles Walter,
The Gersten Conjecture for Witt Groups in the Equicharacteristic Case, Documenta MAthematica, 2002.
%
%
\bibitem{ElHoKhSoYa-MotDeloop} 
E.~Elmanto, M.~Hoyois, A.~A.~Khan, V.~Sosnilo, M.~Yakerson,
Motivic infinite loop spaces,
arXiv:1711.05248.
%

\bibitem{Fasel-ChWittRing} J. Fasel. The Chow-Witt ring. Doc. Math. 12 (2007), 275–312;
\bibitem{Fasel-GroupsdeCW} 
J. Fasel. Groupes de Chow-Witt. Mm. Soc. Math. Fr. (N.S.) No. 113 (2008).

\bibitem{CF}
Calmès, B. and Fasel, J. A primer for Milnor-Witt K-theory sheaves and their cohomology.

\bibitem{FC-CWCorr} Calmès, B. and Fasel, J., Finite Chow-Witt correspondences, arXiv:1412.2989.

\bibitem{Deg06} F. Déglise, Coniveau filtration and mixed motives, arXiv:1106.0905v1

\bibitem{DegFas} Déglise, Fasel, Milnor-Witt motivic complexes.

%
\bibitem{DF_MW-Complexes}
F.~D\'eglise AND J.~Fasel,
{$MW$}-motivic complexes,
arXive:1708.06095 (2017), 


\bibitem{DrKil-finfileds_AND_MorTh}
A.~Druzhinin, J.~I.~Kylling, Framed motives and the zeroth stable motivic homotopy group in odd characteristic, 
arXiv:1809.03238.

\bibitem{FD-MotPairs}
A.~Druzhinin, Motives of smooth affine pairs, preprint arXiv:1803.11388.


\bibitem{DH} Druzhinin, Kolderup, Cohomological correspondences.


\bibitem{ElHoKhSoYa-MotDeloop} 
E.~Elmanto, M.~Hoyois, A.~A.~Khan, V.~Sosnilo, M.~Yakerson,
Motivic infinite loop spaces,
arXiv:1711.05248.

\bibitem{Fasel-ChWittRing} J. Fasel. The Chow-Witt ring. Doc. Math. 12 (2007), 275–312.
\bibitem{Fasel-GroupsdeCW} J. Fasel. Groupes de Chow-Witt. Mm. Soc. Math. Fr. (N.S.) No. 113 (2008).

\bibitem{NFELD-MWCycMod} N.~Feld, Milnor-Witt Cycle Modules, arXiv:1811.12163v1.

\bibitem{GP_MFrAlgVar} 
G.~Garkusha, I.~Panin,
Framed motives of algebraic varieties (after V. Voevodsky),
preprint arXiv:1409.4372.

\bibitem{GP-HIVth}
G.~Garkusha, I.~Panin,
Homotopy invariant presheaves with framed transfers,
arXiv:1504.00884.

\bibitem{GNP_FrMotiveRelSphere}
G.~Garkusha, A.~Neshitov, I.~Panin,
Framed motives of relative motivic spheres, arXiv:1604.02732.


\bibitem{GSZ-MWloc} 
Stefan Gille, Stephan Scully, Changlong Zhong
Milnor-Witt $K$-groups of local rings,
Advances in Math., 2015.
\bibitem{Hoyois2013}
M.~Hoyois, A quadratic refinement of the Grothendieck-Lefschetz-Verdier trace formula,
arXiv:1309.6147.

\bibitem{Hartshorne-AlG}
R.~Hartshorne, Algebraic geometry, Springer, Graduated texts in mathematics: 52 (edited by S.~Axler, F.W.~Gehring, P.R.~Halmos), New York-Heidelberg, 1997.

\bibitem{Horn}
J. Hornbostel, A1-representability of Hermitian K-theory and Witt groups, 
Topology
Volume 44, Issue 3, May 2005, Pages 661-687.

\bibitem{PHandIK-Staiberg}
Po Hu, Igor Kriz,
The Steinberg relation in A1-stable homotopy theory,
International Mathematics Research Notices, 2001, No. 17.

\bibitem{Milnor70}
J.~Milnor, Algebraic K-theory and quadratic forms, Invent. Math. 9, 318-344, (1970).

\bibitem{M-Connect}
F.~Morel, The stable A1-connectivity theorems, K-theory, 2005, vol 35, pp 1-68.

\bibitem{Mor_PullBack}
F.~Morel. Sur les puissances de l’id ́eal fondamental de l’anneau de Witt.
In: Comment. Math. Helv. 79.4 (2004), pp. 689–703. issn: 0010-2571.

\bibitem{M02}
F.~Morel, On the motivic $\pi_0$ of the sphere spectrum, in J.P.C. Greenlees (ed.), Axiomatic, Enriched and Motivic Homotopy Theory, 219-260, 2004 Kluwer Academic Publishers..

\bibitem{M_MilnorConj}F. Morel. “Milnor’s conjecture on quadratic forms and mod 2 motivic com- plexes”. In: Rend. Sem. Mat. Univ. Padova 114 (2005), 63–101 (2006).

\bibitem{M-A1Top} 
F. Morel, A1-algebraic topology over a field, Lecture Notes in Mathematics, 2052. Springer,
Heidelberg, 2012.

\bibitem{Nesh-FrKMW}
A.~Neshitov,
Framed correspondences and the Milnor-Witt K-theory,
Journal of the Institute of Mathematics of Jussieu, 
Vol. 17, Issue 4
(2018) , pp. 823-852.

\bibitem{OVV-MilnorConj}
D. Orlov, A. Vishik, and V. Voevodsky. “An exact sequence for K∗M /2 with applications to quadratic forms”. In: Ann. of Math. (2) 165.1 (2007), pp. 1–13.
issn: 0003-486X. doi: 10.4007/annals.2007.165.1. url: http://dx.doi.org/10.4007/annals.2007.16

\bibitem{Powell-Steinberg}
G.~M.~L.~Powell, Zariski excision and the Steinberg relation, (2002) preprint at 
http://math.univ-angers.fr/~powell/documents/2002/steinberg.pdf.



\bibitem{Rost-CycMod}
M.~Rost, Chow groups with coefficients. Doc. Math. 1 (1996), No. 16, 319–393 (electronic).
\bibitem{Schmid-Wittringhom} M.~Schmid, Wittring Homology, Phd Dissertation, Universitaet Regens- burg, 1998.

\bibitem{Voevodsky-FrCor}
V.  Voevodsky,   Notes  on  framed  correspondences,   unpublished,   2001.  Also  available  at
math.ias.edu/vladimir/files/framed.pdf


\end{thebibliography}
\end{document}